\documentclass[12pt,twoside]{amsart}
\usepackage{cases}
\usepackage{amscd}
\usepackage{amsmath}
\usepackage{amsthm}
\usepackage{amsfonts}
\usepackage{amssymb}
\usepackage{latexsym}
\usepackage[all]{xy}
\usepackage{epsfig}

\date{}
\pagestyle{plain}
\textheight= 22.1 true cm \textwidth =15 true cm
\allowdisplaybreaks[4] \footskip=15pt
\renewcommand{\eqnarray*}
\renewcommand{\uppercasenonmath}[1]{}

\topmargin=27pt \evensidemargin0pt \oddsidemargin0pt

\numberwithin{equation}{section} \theoremstyle{plain}
\newtheorem*{thm*}{Main Theorem}
\newtheorem{thm}{Theorem}[section]
\newtheorem{cor}[thm]{Corollary}
\newtheorem*{cor*}{Corollary}
\newtheorem{lem}[thm]{Lemma}
\newtheorem*{lem*}{Lemma}
\newtheorem{prop}[thm]{Proposition}
\newtheorem*{prop*}{Proposition}
\newtheorem{rem}[thm]{Remark}
\newtheorem*{rem*}{Remark}

\newtheorem*{exa*}{Example}
\newtheorem{df}[thm]{Definition}
\newtheorem*{df*}{Definition}

\newtheorem*{conj*}{Conjecture}
\newtheorem*{ack*}{ACKNOWLEDGEMENTS}




\newcommand{\pf}{\noindent\begin {proof}}
\newcommand{\epf}{\end{proof}}


\begin{document}
\begin{center}
{\Large \bf Cartan-Eilenberg complexes and Auslander categories
\footnotetext{

E-mail: renwei@nwnu.edu.cn (W. Ren), liuzk@nwnu.edu.cn (Z.K. Liu).
}}

\vspace{0.5cm}    Wei Ren and Zhongkui Liu\\
{\small\it Department of Mathematics, Northwest Normal University, Lanzhou 730070, China. \\

}
\end{center}

\bigskip
\centerline { \bf  Abstract}
\leftskip6truemm \rightskip6truemm \noindent
Let $R$ be a commutative noetherian ring with a semi-dualizing module $C$. The Auslander categories with respect to $C$ are related through
Foxby equivalence: $\xymatrix@C=50pt{\mathcal
{A}_C(R) \ar@<0.4ex>[r]^{~C\otimes^{\mathbf{L}}_{R} -} & \mathcal {B}_C(R)
   \ar@<0.4ex>[l]^{~\mathbf{R}\mathrm{Hom}_{R}(C, -)}}$. We firstly intend to extend the Foxby equivalence
to Cartan-Eilenberg complexes. To this end, C-E Auslander categories, C-E $\mathcal{W}$ complexes and
C-E $\mathcal{W}$-Gorenstein complexes are introduced, where $\mathcal{W}$ denotes a self-orthogonal class of $R$-modules.
Moreover, criteria for finiteness of C-E Gorenstein dimensions of complexes in terms of resolution-free characterizations
are considered.  \\
\vbox to 0.3cm{}\\
{\it Key Words:}  Cartan-Eilenberg complexes; Auslander class; Bass class; Self-orthogonal class; Semi-dualizing module.\\
\vspace{4mm}
{\it 2010 Mathematics Subject Classification:}  18G35; 18G05; 18G20.

\leftskip0truemm \rightskip0truemm \vbox to 0.2cm{}

\section{\bf Introduction}
In \cite{V97} Verdier introduced the notions of a Cartan-Eilenberg projective (injective) complex and
a Cartan-Eilenberg projective (injective) resolution, which originated from \cite{CE56}. A complex $P$ is said
to be Cartan-Eilenberg projective provided that $P$, $\mathrm{Z}(P)$, $\mathrm{B}(P)$ and $\mathrm{H}(P)$
are all complexes of projective modules.
Recently, Enochs showed in \cite{E11} that Cartan-Eilenberg resolutions can be defined in terms
of precovers and preenvelopes by Cartan-Eilenberg projective and injective complexes, and he further
introduced Cartan-Eilenberg Gorenstein projective and injective complexes.

Let $R$ be a commutative noetherian ring.
Recall that a semi-dualizing module over $R$, which provides a generalization of $R$ and a dualizing module, is a
finite generated module $C$ such that the homothety morphism
$R\rightarrow \mathrm{\mathbf{R}Hom}_{R}(C, C)$ is invertible in the derived category $\mathcal{D}(R)$.
For a given semi-dualizing module $C$, consider two triangulated subcategories
of the bounded derived category $\mathcal{D}_{b}(R)$, namely the Auslander class
 $\mathcal{A}_{C}(R)$ and the Bass class $\mathcal{B}_{C}(R)$.
The adjoint pair of functors $(C\otimes^{\mathbf{L}}_{R} -, \mathbf{R}\mathrm{Hom}_{R}(C, -))$ on $\mathcal{D}(R)$
restricts to a pair of equivalences of
categories
$\xymatrix@C=80pt{\mathcal
{A}_C(R) \ar@<0.4ex>[r]^{~C\otimes^{\mathbf{L}}_{R} -} & \mathcal {B}_C(R)
   \ar@<0.4ex>[l]^{~\mathbf{R}\mathrm{Hom}_{R}(C, -)}}$
known as Foxby equivalence.

Using \cite[Proposition 4.4]{Chr01}, it is direct to obtain a commutative diagram:
\begin{center}$\xymatrix@C=90pt{
\overline{\mathcal{P}}(R)\ar@{^{(}->}[d]\ar@<0.4ex>[r]^{C\otimes^{\mathbf{L}}_{R}-}
&\overline{\mathcal{P}}_{C}(R)\ar@{^{(}->}[d] \ar@<0.4ex>[l]^{\mathbf{R}\mathrm{Hom}_{R}(C, -)}\\
 \mathcal{A}_{C}(R) \ar@<0.4ex>[r]^{C\otimes^{\mathbf{L}}_{R}-}
&\mathcal{B}_{C}(R) \ar@<0.4ex>[l]^{\mathbf{R}\mathrm{Hom}_{R}(C, -)}\\
\overline{\mathcal{I}}_{C}(R) \ar@{^{(}->}[u] \ar@<0.4ex>[r]^{C\otimes^{\mathbf{L}}_{R}-}
&\overline{\mathcal{I}}(R)\ar@{^{(}->}[u]\ar@<0.4ex>[l]^{\mathbf{R}\mathrm{Hom}_{R}(C, -)}
 }$\end{center}
By $\overline{\mathcal{P}}(R)$ (resp. $\overline{\mathcal{P}}_{C}(R)$) we denote the category consisting of complex $X$
such that $X\simeq U$ in $\mathcal{D}_{b}(R)$, where $U$ is a bounded complex
of projective (resp. $C$-projective) modules; and similarly $\overline{\mathcal{I}}(R)$ and $\overline{\mathcal{I}}_{C}(R)$ are defined.

So it is natural to ask: whether there exist subcategories of $\mathcal{D}_{b}(R)$ with respect to
Cartan-Eilenberg (C-E for short) complexes, for which one has the related Foxby equivalence?
In section 4, we introduce C-E Auslander class $\mathrm{CE}\text{-}\mathcal{A}_{C}(R)$ and C-E Bass class $\mathrm{CE}\text{-}\mathcal{B}_{C}(R)$,
and obtain the following diagram by collecting separating results for proving it, which extends the existed Foxby equivalence.
We remark that the dual of the diagram exists as well.

\vspace{2mm} \noindent{\bf Theorem A.}\label{thm A}\textit{
Let $R$ be a commutative noetherian ring. If $C$ is a semi-dualizing module for $R$, then there is a commutative diagram, where the vertical inclusions
are full embedding, and the horizontal arrows are equivalences of categories:
$$\xymatrix@C=100pt{
\mathrm{CE}\text{-}\overline{\mathcal{P}}(R) \ar@{^{(}->}[d]\ar@<0.4ex>[r]^{C\otimes^{\mathbf{L}}_{R}-}
&\mathrm{CE}\text{-}\overline{\mathcal{P}_{C}}(R) \ar@{^{(}->}[d] \ar@<0.4ex>[l]^{\mathbf{R}\mathrm{Hom}_{R}(C, -)}\\
\mathrm{CE}\text{-}\overline{\mathcal{G}(\mathcal{P})}(R)\cap \mathrm{CE}\text{-}\mathcal{A}_{C}(R) \ar@{^{(}->}[d]\ar@<0.4ex>[r]^{C\otimes^{\mathbf{L}}_{R}-}
&\mathrm{CE}\text{-}\overline{\mathcal{G}(\mathcal{P}_C)}(R) \ar@{^{(}->}[d]\ar@<0.4ex>[l]^{\mathbf{R}\mathrm{Hom}_{R}(C, -)}\\
\mathrm{CE}\text{-}\mathcal{A}_{C}(R) \ar@{^{(}->}[d] \ar@<0.4ex>[r]^{C\otimes^{\mathbf{L}}_{R}-}
&\mathrm{CE}\text{-}\mathcal{B}_{C}(R) \ar@{^{(}->}[d]\ar@<0.4ex>[l]^{\mathbf{R}\mathrm{Hom}_{R}(C, -)}\\
\mathcal{A}_{C}(R)  \ar@<0.4ex>[r]^{C\otimes^{\mathbf{L}}_{R}-}
&\mathcal{B}_{C}(R) \ar@<0.4ex>[l]^{\mathbf{R}\mathrm{Hom}_{R}(C, -)}}
$$}

Since the complexes are related to the classes of projective, injective, $C$-projective and
$C$-injective modules, which are known to be self-orthogonal with respect to Ext, in section 2
we generally define and study C-E $\mathcal{W}$ complexes relative to
a self-orthogonal class $\mathcal{W}$ of $R$-modules. In section 3, we focus on studying C-E
$\mathcal{W}$-Gorenstein complexes by showing that two potential choices for defining these complexes are equivalent.

\vspace{2mm} \noindent{\bf Theorem B.}\label{thm B}\textit{
For a complex $G$, the following are equivalent:\\
\indent $(1)$ $G$ is a $\mathrm{C}$-$\mathrm{E}$ $\mathcal{W}$-Gorenstein complex,
i.e. $G$, $\mathrm{B}(G)$, $\mathrm{Z}(G)$ and $\mathrm{H}(G)$ are complexes of $\mathcal{W}$-Gorenstein modules.\\
\indent (2) $G$ admits a $\mathrm{C}$-$\mathrm{E}$ complete $\mathcal{W}$ resolution (see Definition \ref{df 3.4}).}\\

The notions of a dualizing complex and a dualizing module have
been extensively developed and applied in many areas such as commutative algebra and algebraic geometry.
Recall that a semi-dualizing module $C$ is dualizing provided that $C$ has finite injective dimension.
Auslander categories with respect to semi-dualizing and dualizing modules (complexes) are used to find criteria for finiteness of
Gorenstein homological dimensions of modules
and complexes in terms of resolution-free characterizations, see for examples \cite{Chr00, Chr01, CFH06, HJ06, SWSW10}.
Finally, homological dimensions with respect to C-E complexes are considered in section 5, and we obtain the following.
Moreover, it is worth noting that the complexes considered in Theorem A are precisely those with finite C-E homological dimensions.

\vspace{2mm} \noindent{\bf Theorem C.}\label{thm C}\textit{
Let $R$ be a commutative noetherian ring with a dualizing module $C$, $X\in\mathcal{D}_{\sqsupset}(R)$.
Assume that $X\simeq G$ for a $\mathrm{C}$-$\mathrm{E}$ Gorenstein projective complex $G$.
Then the following are equivalent:\\
\indent$(1)$  $\mathrm{CE}\text{-}\mathrm{Gpd}_{R}X$ is finite.\\
\indent$(2)$  $X\in \mathrm{CE}\text{-}\mathcal{A}_{C}(R)$.\\
\indent$(3)$  $\mathrm{Gpd}_{R}X$, $\mathrm{Gpd}_{R}\mathrm{Z}(X)$, $\mathrm{Gpd}_{R}\mathrm{B}(X)$ and $\mathrm{Gpd}_{R}\mathrm{H}(X)$ are all finite.\\
\indent$(4)$  $X$, $\mathrm{Z}(X)$, $\mathrm{B}(X)$ and $\mathrm{H}(X)$ are in $\mathcal{A}_{C}(R)$.
}\\

\noindent{\bf Notations.}  Throughout, $R$ is a commutative ring.
An $R$-complex
$X = \cdots\longrightarrow X_{n+1}\stackrel{d_{n+1}}\longrightarrow X_{n}\stackrel{d_{n}}\longrightarrow X_{n-1}\longrightarrow\cdots$
will be denoted $(X_{i}, d_{i})$ or $X$. The $n$th cycle module is defined as $\mathrm{Ker}d_{n}$ and is denoted by $\mathrm{Z}_{n}(X)$,
$n$th boundary module is $\mathrm{Im}d_{n+1}$ and is denoted by $\mathrm{B}_{n}(X)$,
and $n$th homology module is $\mathrm{H}_{n}(X) = \mathrm{Z}_{n}(X) / \mathrm{B}_{n}(X)$. The complexes of cycles and boundaries, and the
homology complex of $X$ is denoted by $\mathrm{Z}(X)$, $\mathrm{B}(X)$ and $\mathrm{H}(X)$ respectively.
For a given $R$-module $M$, we let $S^{n}(M)$ denote the complex with all entries 0 except $M$ in degree $n$. We let
$D^{n}(M)$ denote the complex $X$ with $X_{n} = X_{n-1} = M$ and all other entries 0,
and with all maps 0 except $d_{n} = 1_{M}$.

We use $\mathcal{C}(R)$ to denote the category of $R$-complexes.
The symbol ``$\simeq$'' is used to designate quasi-isomorphisms
in the category $\mathcal{C}(R)$ and isomorphisms in the derived category $\mathcal{D}(R)$.
The left derived functor of the tensor product functor of
$R$-complexes is denoted by $-\otimes ^{\mathbf{L}}_{R}-$, and $\mathbf{R}\mathrm{Hom}_{R}(-, -)$
denotes the right derived functor of the homomorphism functor of complexes.

Let $C$ be a semi-dualizing module over a noetherian ring $R$.
The Auslander categories with respect to $C$,
denoted by $\mathcal{A}_{C}(R)$ and $\mathcal{B}_{C}(R)$, are the full subcategories of $\mathcal{D}_{b}(R)$
whose objects are specified as follows:
$$\mathcal{A}_{C}(R)= \left\{X\in \mathcal{D}_{b}(R) \, \vline \,\begin{matrix}  \eta_{X}: X\rightarrow
 \, \mathbf{R}\mathrm{Hom}_{R}(C,  C\otimes_{R}^{\mathbf{L}}X) \text{ is an iso-}\\
 \text{morphism in }\mathcal{D}(R),
 \text{ and } C\otimes_{R}^{\mathbf{L}}X\in \mathcal{D}_{b}(R)
 \end{matrix} \right\}
$$
and
$$\mathcal{B}_{C}(R)= \left\{Y\in \mathcal{D}_{b}(R) \, \vline \,\begin{matrix}
\varepsilon_{Y}: C\otimes_{R}^{\mathbf{L}}\mathbf{R}\mathrm{Hom}(C, Y)\rightarrow Y
\text{ is an isomor-} \\ \text{phism in }\mathcal{D}(R), \text{ and }\mathbf{R}\mathrm{Hom}(C, Y)
\in \mathcal{D}_{b}(R)
 \end{matrix} \right\}
,$$
where $\eta$ and $\varepsilon$ denote the unit and counit of the adjoint pair
$(C\otimes_{R}^{\mathbf{L}}-, \mathbf{R}\mathrm{Hom}(C, -))$.

\section{\bf Cartan-Eilenberg $\mathcal{W}$ complexes}

Let $\mathcal{W}$ be a class of $R$-modules. $\mathcal{W}$ is called self-orthogonal if it satisfies
the following condition:
$$\mathrm{Ext}^{i}_{R}(W, W^{'}) = 0 \text{ for all  }  W, W^{'}\in \mathcal{W} \text{ and all } i \geq 1.$$
In the following, $\mathcal{W}$ always denotes a self-orthogonal class of $R$-modules which is closed
under extensions, finite direct sums and direct summands. Geng and Ding enumerated
in \cite[Remark 2.3]{GD11} some interesting examples of self-orthogonal classes.

\begin{df}\label{df 2.1}
A complex $X$ is said to be a $\mathrm{C}$-$\mathrm{E}$ $\mathcal{W}$ complex if $X$, $\mathrm{Z}(X)$, $\mathrm{B}(X)$
and $\mathrm{H}(X)$ are complexes each of whose terms belongs to $\mathcal{W}$.
\end{df}

\begin{rem}\label{rem 2.2}
\indent $(1)$ For any module $M\in \mathcal{W}$ and any $n\in \mathbb{Z}$, $S^{n}(M)$ and $D^{n}(M)$ are $\mathrm{C}$-$\mathrm{E}$ $\mathcal{W}$ complexes.\\
\indent $(2)$ In particular, if $\mathcal{W}$ denotes the class of projective (resp. injective) modules, then $\mathrm{C}$-$\mathrm{E}$ $\mathcal{W}$ complexes
are precisely $\mathrm{C}$-$\mathrm{E}$ projective (resp. $\mathrm{C}$-$\mathrm{E}$ injective) complexes.
\end{rem}

We will frequently consider complexes $X$ with $d^{X} = 0$. Such a complex is completely determined by its family of terms
$(X_{n})_{n\in \mathbb{Z}}$, i.e. by the underlying structure of $X$ as a graded module with the grading over $\mathbb{Z}$.
So by the term graded module we will mean a complex $X$ with $d^{X} = 0$.

Recall that $X$ is called a $\mathcal{W}$ complex \cite{Liang} if $X$ is exact and Z$_{n}(X)\in \mathcal{W}$ for any
$n\in \mathbb{Z}$. We will denote the class of $\mathcal{W}$ complexes by $\widetilde{\mathcal{W}}$.

\begin{prop}\label{prop 2.3}
$X$ is a $\mathrm{C}$-$\mathrm{E}$  $\mathcal{W}$ complex if and only if $X$ can be divided into direct sums $X = X^{'} \oplus X^{''}$
where $X^{'}\in \widetilde{\mathcal{W}}$ and $X^{''}$ is a graded module with all items in $\mathcal{W}$.
\end{prop}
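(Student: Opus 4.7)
The plan is to prove the two implications separately, with the non-trivial work concentrated in the ``only if'' direction, where I will exploit the self-orthogonality of $\mathcal{W}$ to split the standard short exact sequences of cycles and boundaries, and then package the splittings into honest subcomplexes.

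For the ``if'' direction, suppose $X = X' \oplus X''$ with $X' \in \widetilde{\mathcal{W}}$ and $X''$ a graded module (i.e.\ $d^{X''} = 0$) whose terms lie in $\mathcal{W}$. Since homology, cycles and boundaries are computed termwise for direct sums of complexes, I would compute $\mathrm{Z}(X) = \mathrm{Z}(X') \oplus X''$, $\mathrm{B}(X) = \mathrm{B}(X') \oplus 0$, and $\mathrm{H}(X) = \mathrm{H}(X') \oplus X'' = X''$, using that $X'$ is exact. Each of $X'_n$, $\mathrm{Z}_n(X')$, $\mathrm{B}_n(X') = \mathrm{Z}_n(X')$ lies in $\mathcal{W}$ (the first because $\mathcal{W}$ is closed under extensions: apply it to $0\to \mathrm{Z}_n(X')\to X'_n\to \mathrm{Z}_{n-1}(X')\to 0$), so, together with $X''_n \in \mathcal{W}$ and closure under finite direct sums, every term of $X$, $\mathrm{Z}(X)$, $\mathrm{B}(X)$, $\mathrm{H}(X)$ lies in $\mathcal{W}$.

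For the ``only if'' direction, I start with the two canonical short exact sequences
\begin{equation*}
0 \longrightarrow \mathrm{B}_n(X) \longrightarrow \mathrm{Z}_n(X) \longrightarrow \mathrm{H}_n(X) \longrightarrow 0,
\qquad
0 \longrightarrow \mathrm{Z}_n(X) \longrightarrow X_n \xrightarrow{\ d_n\ } \mathrm{B}_{n-1}(X) \longrightarrow 0.
\end{equation*}
By hypothesis every module appearing here lies in $\mathcal{W}$, and self-orthogonality gives $\mathrm{Ext}^1_R(\mathrm{H}_n(X), \mathrm{B}_n(X)) = 0$ and $\mathrm{Ext}^1_R(\mathrm{B}_{n-1}(X), \mathrm{Z}_n(X)) = 0$, so both sequences split. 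Choose sections $\sigma_n\colon \mathrm{H}_n(X) \to \mathrm{Z}_n(X)$ and $\tau_n\colon \mathrm{B}_{n-1}(X) \to X_n$ (so $d_n\circ \tau_n = \mathrm{id}$). This is the key step, and the self-orthogonality is exactly what makes it work.

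Finally, I assemble the splittings. Define $X''_n := \sigma_n(\mathrm{H}_n(X)) \subseteq \mathrm{Z}_n(X)$ and $X'_n := \mathrm{B}_n(X) \oplus \tau_n(\mathrm{B}_{n-1}(X))$; these are submodules of $X_n$ with $X_n = X'_n \oplus X''_n$. Since $X''_n \subseteq \mathrm{Z}_n(X)$, the differential vanishes on $X''$, making it a graded submodule with $X''_n \cong \mathrm{H}_n(X) \in \mathcal{W}$. For $X'$, the differential $d_n$ annihilates $\mathrm{B}_n(X)$ and sends $\tau_n(\mathrm{B}_{n-1}(X))$ isomorphically onto $\mathrm{B}_{n-1}(X) \subseteq X'_{n-1}$, so $X'$ is a subcomplex with $\mathrm{Z}_n(X') = \mathrm{B}_n(X') = \mathrm{B}_n(X) \in \mathcal{W}$, i.e.\ $X' \in \widetilde{\mathcal{W}}$. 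The main obstacle is only bookkeeping at this stage: one must verify that the chosen sections make $X'$ and $X''$ genuine subcomplexes (not just a termwise splitting), which is automatic from the construction once $X''$ is placed inside $\mathrm{Z}(X)$.
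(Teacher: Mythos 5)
Your proof is correct and follows essentially the same route as the paper's: split the two canonical short exact sequences $0\to \mathrm{B}_n(X)\to \mathrm{Z}_n(X)\to \mathrm{H}_n(X)\to 0$ and $0\to \mathrm{Z}_n(X)\to X_n\to \mathrm{B}_{n-1}(X)\to 0$ via self-orthogonality, then assemble the summands into an exact disk-type piece and a graded sphere-type piece. The only cosmetic difference is that you realize $X'$ and $X''$ as internal subcomplexes via chosen sections, whereas the paper writes them externally as $\bigoplus_n D^n(\mathrm{B}_{n-1}(X))$ and $\bigoplus_n S^n(\mathrm{H}_n(X))$.
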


\begin{proof}
Since $X^{'}\in \widetilde{\mathcal{W}}$ is exact, B$_{n}(X^{'})$ = Z$_{n}(X^{'}) \in \mathcal{W}$, H$_{n}(X^{'}) = 0$
for all $n\in \mathbb{Z}$. Then $X^{'}$ is a C-E $\mathcal{W}$ complex. It is easy to see $X^{''}$ is  a C-E $\mathcal{W}$ complex.
Then such direct sum is a C-E $\mathcal{W}$ complex.

Conversely, suppose that $X$ is a C-E $\mathcal{W}$ complex. We have the exact sequences of $R$-modules
$0\rightarrow \mathrm{B}_{n}(X)\rightarrow \mathrm{Z}_{n}(X)\rightarrow \mathrm{H}_{n}(X)\rightarrow 0$ and
$0\rightarrow \mathrm{Z}_{n}(X)\rightarrow X_{n}\rightarrow \mathrm{B}_{n-1}(X)\rightarrow 0$.
Since $\mathrm{Z}_{n}(X), \mathrm{B}_{n}(X), \mathrm{H}_{n}(X)\in \mathcal{W}$ for all $n\in \mathbb{Z}$, each sequence splits.
This allows us to write $X_{n} = \mathrm{B}_{n}(X)\oplus\mathrm{H}_{n}(X)\oplus\mathrm{B}_{n-1}(X)$.
Then $$d_{n}: X_{n} = \mathrm{B}_{n}(X)\oplus\mathrm{H}_{n}(X)\oplus\mathrm{B}_{n-1}(X) \rightarrow
X_{n-1} = \mathrm{B}_{n-1}(X)\oplus\mathrm{H}_{n-1}(X)\oplus\mathrm{B}_{n-2}(X)$$
is the map $(x, y, z)\rightarrow (z, 0, 0)$. Let $X^{'} = \bigoplus_{n\in \mathbb{Z}}D^{n}(\mathrm{B}_{n-1}(X))$,
$X^{''} = \bigoplus_{n\in\mathbb{Z}}S^{n}(\mathrm{H}_{n}(X))$.
We then obtain the desired direct sum decomposition $X = X^{'} \oplus X^{''}$.
\end{proof}

\begin{cor}\label{cor 2.4}(\cite[Proposition 3.4]{E11})
A complex $X$ is $\mathrm{C}$-$\mathrm{E}$ projective if and only if $X$ can be divided into direct sums $X = X^{'} \oplus X^{''}$,
where $X^{'}$ is a projective complex and $X^{''}$ is a graded module with all items being projective.
\end{cor}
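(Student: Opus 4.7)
The plan is to deduce this corollary as a direct specialization of Proposition~\ref{prop 2.3} to the case $\mathcal{W} = \mathcal{P}(R)$, the class of projective $R$-modules.

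First, I would verify that $\mathcal{P}(R)$ satisfies the standing hypotheses imposed on $\mathcal{W}$: self-orthogonality is the textbook fact that $\mathrm{Ext}^{i}_{R}(P, Q) = 0$ for all projective modules $P, Q$ and all $i \geq 1$; closure under extensions, finite direct sums, and direct summands are also standard properties of projectives. Hence Proposition~\ref{prop 2.3} applies with this choice of $\mathcal{W}$.

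Next, I would identify both sides of the corollary with those of the proposition. By Remark~\ref{rem 2.2}(2), a C-E $\mathcal{W}$ complex in the sense of Definition~\ref{df 2.1} coincides with a C-E projective complex when $\mathcal{W} = \mathcal{P}(R)$. On the other hand, for this choice the class $\widetilde{\mathcal{W}}$ consists by definition of the exact complexes $X'$ whose cycle modules are all projective; such complexes are precisely the classical projective complexes (equivalently, projective objects of $\mathcal{C}(R)$), since any such $X'$ splits as a direct sum of shifts of $D^{n}(P)$'s with $P$ projective. Consequently, the decomposition $X = X' \oplus X''$ supplied by Proposition~\ref{prop 2.3} is exactly the one claimed in the corollary: $X'$ is a projective complex and $X''$ is a graded module whose entries are projective.

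The only delicate point is terminological: one must make sure that "projective complex" in the statement of the corollary refers to a projective object of $\mathcal{C}(R)$ rather than merely to a complex whose terms are projective modules; the identification of $\widetilde{\mathcal{W}}$ with the projective complexes above settles this. Once this convention is fixed, no further argument beyond invoking Proposition~\ref{prop 2.3} is required.
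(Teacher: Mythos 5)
Your proposal is correct and is exactly the route the paper intends: Corollary \ref{cor 2.4} is stated as an immediate specialization of Proposition \ref{prop 2.3} to $\mathcal{W}=\mathcal{P}(R)$, using Remark \ref{rem 2.2}(2) to identify C-E $\mathcal{W}$ complexes with C-E projective complexes and the identification of $\widetilde{\mathcal{W}}$ with the projective objects of $\mathcal{C}(R)$. Your extra care about the meaning of ``projective complex'' (projective object, i.e.\ exact with projective cycles, rather than merely degreewise projective) is the right convention and matches the paper's usage.
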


Let $R$ be a noetherian ring with a semi-dualizing module $C$.
An $R$-module is $C$-projective if it has the form $C\otimes_{R}P$ for some projective $R$-module $P$.
An $R$-module is $C$-injective if it has the form $\mathrm{Hom}_{R}(C, E)$ for some injective $R$-module $E$.
Let $\mathcal{P}_{C} = \{ C\otimes_{R}P \mid P\textrm{ is a projective }R\textrm{-module} \}$ and
$\mathcal{I}_{C} = \{ \mathrm{Hom}_{R}(C, E) \mid E\textrm{ is an injective }R\textrm{-module} \}$
denote the class of $C$-projective and $C$-injective modules, respectively.

By \cite[Theorem 3.1]{GD11},  $\mathcal{P}_{C} = \mathrm{Add}C$ and $\mathcal{I}_{C} = \mathrm{Prod}C^{+}$,
where $\mathrm{Add}C$ stands for the category consisting of all modules isomorphic to direct summands of direct sums of copies of $C$,
and $\mathrm{Prod}C^{+}$ the category consisting of all modules isomorphic to direct summands of direct products of copies of
$C^{+} = \mathrm{Hom}_{R}(C, Q)$ with $Q$ an injective cogenerator.
Thus $\mathcal{P}_{C}$ and $\mathcal{I}_{C}$ are self-orthogonal.
If we put $\mathcal{W}=\mathcal{P}_{C}$
(resp. $\mathcal{W}=\mathcal{I}_{C}$), then a $\mathrm{C}$-$\mathrm{E}$ $\mathcal{W}$ complex above is particularly
called a $\mathrm{C}$-$\mathrm{E}$ $C$-projective (resp. $\mathrm{C}$-$\mathrm{E}$ $C$-injective) complex.

\begin{cor}\label{cor 2.5}
Let $R$ be a noetherian ring with a semi-dualizing module $C$ and $X$ an $R$-complex.
Then $X$ is $\mathrm{C}$-$\mathrm{E}$ $C$-projective if and only if $X$ can be divided into direct sums $X = X^{'} \oplus X^{''}$
where $X^{'}\in \widetilde{\mathcal{P}_{C}}$ and $X^{''}$ is a graded module with all items
in $\mathcal{P}_{C}$.

Similarly, $X$ is $\mathrm{C}$-$\mathrm{E}$ $C$-injective if and only if $X$ can be divided into direct sums $X = X^{'} \oplus X^{''}$
where $X^{'}\in \widetilde{\mathcal{I}_{C}}$ and $X^{''}$ is a graded module with all items in $\mathcal{I}_{C}$.
\end{cor}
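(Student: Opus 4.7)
The plan is to specialize Proposition 2.3 directly to $\mathcal{W}=\mathcal{P}_C$ for the first equivalence and to $\mathcal{W}=\mathcal{I}_C$ for the second. By the definitions in the paragraph preceding the corollary, a C-E $C$-projective complex is literally a C-E $\mathcal{P}_C$ complex, and $\widetilde{\mathcal{P}_C}$ is the class of $\mathcal{P}_C$-complexes in the sense used in Proposition 2.3; the same observation applies on the $C$-injective side. So once one checks that $\mathcal{P}_C$ and $\mathcal{I}_C$ fall under the standing hypotheses of Section 2 (self-orthogonal, closed under extensions, finite direct sums, and direct summands), the conclusion is immediate in both directions.

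For the verification, I would invoke the identifications $\mathcal{P}_C=\mathrm{Add}\,C$ and $\mathcal{I}_C=\mathrm{Prod}\,C^+$ from \cite[Theorem 3.1]{GD11}, which trivially provide closure under finite direct sums and direct summands. Self-orthogonality, already asserted just before the corollary, follows because the semi-dualizing condition gives $\mathrm{Ext}^i_R(C,C)=0$ for $i\ge 1$, and since $C$ is finitely generated over the noetherian ring $R$, $\mathrm{Ext}^i_R(C,-)$ commutes with the direct sums defining $\mathrm{Add}\,C$; the dual statement handles $\mathrm{Prod}\,C^+$ via $\mathrm{Ext}$-commutation with products in the second slot.

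For closure of $\mathcal{P}_C$ under extensions, given a short exact sequence $0\to C\otimes_R P_1\to M\to C\otimes_R P_2\to 0$ with $P_1,P_2$ projective, I would apply $\mathrm{Hom}_R(C,-)$; using the Ext-vanishing above together with the isomorphism $\mathrm{Hom}_R(C,C\otimes_R P)\cong P$ (which comes from $\mathrm{Hom}_R(C,C)=R$ and the commutation of $\mathrm{Hom}_R(C,-)$ with direct sums when $C$ is finitely presented over a noetherian ring), one obtains a split short exact sequence of projectives, so $\mathrm{Hom}_R(C,M)$ is projective. Tensoring back with $C$ and using the counit isomorphism, valid since the outer terms lie in the Bass class and the Bass class is closed under extensions, identifies $M$ with $C\otimes_R\mathrm{Hom}_R(C,M)\in\mathcal{P}_C$. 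The closure of $\mathcal{I}_C$ under extensions is obtained by the formally dual argument with $C\otimes_R-$ in place of $\mathrm{Hom}_R(C,-)$.

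No genuine obstacle is expected here; the corollary is by design a transcription of Proposition 2.3. The only care needed is bookkeeping in the verification of closure under extensions, where one must make sure to apply the correct identity of the semi-dualizing module $C$ at each step and to stay within the Auslander and Bass classes where the unit and counit are isomorphisms.
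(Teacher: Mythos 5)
Your proposal is correct and matches the paper's (implicit) proof: the corollary is stated there as an immediate specialization of Proposition \ref{prop 2.3} to $\mathcal{W}=\mathcal{P}_C$ and $\mathcal{W}=\mathcal{I}_C$, justified by the preceding paragraph's appeal to $\mathcal{P}_C=\mathrm{Add}\,C$ and $\mathcal{I}_C=\mathrm{Prod}\,C^{+}$ from \cite[Theorem 3.1]{GD11}. The only remark is that your verification of closure under extensions can be shortened: once self-orthogonality and closure under finite direct sums are in hand, $\mathrm{Ext}^1_R(C\otimes_R P_2,\,C\otimes_R P_1)=0$ forces every such extension to split, so the detour through $\mathrm{Hom}_R(C,-)$, the counit, and the Bass class is unnecessary.
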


\section{\bf Cartan-Eilenberg $\mathcal{W}$-Gorenstein complexes}
Recall that an $R$-module $M$ is said to be $\mathcal{W}$-Gorenstein \cite[Definition 2.2]{GD11} if there exists an exact sequence
$$W_{\bullet} = \cdots\longrightarrow W_{1}\longrightarrow W_{0}\longrightarrow  W_{-1}\longrightarrow  W_{-2}\longrightarrow \cdots$$
of modules in $\mathcal{W}$ such that $M = \mathrm{Ker}(W_{-1} \rightarrow W_{-2})$ and $W_{\bullet}$ is
$\mathrm{Hom}_{R}(\mathcal{W},-)$ and $\mathrm{Hom}_{R}(-, \mathcal{W})$ exact. In this case, $W_{\bullet}$ is called a complete
$\mathcal{W}$-resolution of $M$.
This covers a various of examples by different choices of $\mathcal{W}$, for instance, Gorenstein projective and Gorenstein
injective modules.

We note that the class of $\mathcal{W}$-Gorenstein modules is just the class $\mathcal{G}(\mathcal{W})$ introduced
by Sather-Wagstaff and coauthors \cite{SWSW08}
when the abelian category $\mathcal{A}$ is taken to be the category of $R$-modules. So $\mathcal{G}(\mathcal{W}) = \mathcal{G}^{2}(\mathcal{W})$.
The symbol $\mathcal{G}^{2}(\mathcal{W})$ denotes the class of $R$-module $M$ defined by an iteration of the procedure used to define
$\mathcal{W}$-Gorenstein modules, that is,  there exists an exact sequence
$$G_{\bullet} = \cdots\longrightarrow G_{1}\longrightarrow G_{0}\longrightarrow G_{-1}\longrightarrow G_{-2}\longrightarrow \cdots$$
of $\mathcal{W}$-Gorenstein modules such that $M = \mathrm{Ker}(G_{-1} \rightarrow G_{-2})$ and $G_{\bullet}$  remains exact by applying
$\mathrm{Hom}_{R}(G,-)$ and $\mathrm{Hom}_{R}(-, G)$ for any $\mathcal{W}$-Gorenstein module $G$.

\begin{df}\label{df 3.1}
A complex $X$ is said to be a $\mathrm{C}$-$\mathrm{E}$ $\mathcal{W}$-Gorenstein complex if $X$, $\mathrm{Z}(X)$, $\mathrm{B}(X)$
and $\mathrm{H}(X)$ are complexes consisting of $\mathcal{W}$-Gorenstein modules.
\end{df}

We will show that one can also use a modification of the definition of $\mathcal{W}$-Gorenstein module to define such a complex.
First, we need to recall the following definition.

\begin{df}\label{df 3.2}(\cite[Definition 5.3]{E11})
A complex of complexes
$$\mathcal{C} = \cdots\longrightarrow C^{2}\longrightarrow C^{1}\longrightarrow  C^{0}\longrightarrow  C^{-1}\longrightarrow \cdots$$
is said to be $\mathrm{C}$-$\mathrm{E}$  exact if the following sequences are all exact:\\
\indent $(\mathrm{1})$ $\cdots\longrightarrow C^{1}\longrightarrow  C^{0}\longrightarrow  C^{-1}\longrightarrow \cdots$.\\
\indent $(\mathrm{2})$ $\cdots\longrightarrow \mathrm{Z}(C^{1})\longrightarrow  \mathrm{Z}(C^{0})\longrightarrow  \mathrm{Z}(C^{-1})\longrightarrow \cdots$.\\
\indent $(\mathrm{3})$ $\cdots\longrightarrow \mathrm{B}(C^{1})\longrightarrow  \mathrm{B}(C^{0})\longrightarrow  \mathrm{B}(C^{-1})\longrightarrow \cdots$.\\
\indent $(\mathrm{4})$ $\cdots\longrightarrow C^{1}/\mathrm{Z}(C^{1})\longrightarrow  C^{0}/\mathrm{Z}(C^{0})\longrightarrow  C^{-1}/\mathrm{Z}(C^{-1})\longrightarrow \cdots$.\\
\indent $(\mathrm{5})$ $\cdots\longrightarrow C^{1}/\mathrm{B}(C^{1})\longrightarrow  C^{0}/\mathrm{B}(C^{0})\longrightarrow  C^{-1}/\mathrm{B}(C^{-1})\longrightarrow \cdots$.\\
\indent $(\mathrm{6})$ $\cdots\longrightarrow \mathrm{H}(C^{1})\longrightarrow  \mathrm{H}(C^{0})\longrightarrow  \mathrm{H}(C^{-1})\longrightarrow \cdots$.
\end{df}

\begin{rem}\label{rem 3.3}
In the above definition, exactness of (1) and (2) implies exactness of all (1)-(6), and exactness of (1) and (5) implies
exactness of all (1)-(6).
\end{rem}

In the following, we focus on C-E $\mathcal{W}$-Gorenstein complexes and we show that such complexes can be obtained by a so-called
C-E complete $\mathcal{W}$-resolution.

\begin{df}\label{df 3.4}
For a complex $G$, by a  $\mathrm{C}$-$\mathrm{E}$ complete $\mathcal{W}$-resolution of $G$ we mean a
$\mathrm{C}$-$\mathrm{E}$ exact sequence
$$\mathbb{W} = \cdots\longrightarrow W^{1}\longrightarrow W^{0}\longrightarrow  W^{-1}\longrightarrow  W^{-2}\longrightarrow \cdots$$
of $\mathrm{C}\text{-}\mathrm{E }~\mathcal{W}$ complexes with $G = \mathrm{Ker}\,(W^{-1}\rightarrow W^{-2})$, such that it remains
exact after applying $\mathrm{Hom}_{\mathcal{C}(R)}(V, -)$ and $\mathrm{Hom}_{\mathcal{C}(R)}(-, V)$
for any $\mathrm{C}\text{-}\mathrm{E }~\mathcal{W}$
complex $V$.
\end{df}

There are a few useful adjoint relationships between the category of $R$-modules and the category
of $R$-complexes.

\begin{lem} \label{lem 3.5} (\cite[Lemma 3.1]{Gil04})
For any $R$-module $M$ and any $R$-complex $X$, we have the following natural isomorphisms:\\
\indent $\mathrm{(1)}$ $\mathrm{Hom}_{\mathcal{C}(R)}(D^{n}(M), X) \cong \mathrm{Hom}_{R}(M, X_{n})$.\\
\indent $\mathrm{(2)}$ $\mathrm{Hom}_{\mathcal{C}(R)}(S^{n}(M), X) \cong \mathrm{Hom}_{R}(M, \mathrm{Z}_{n}(X))$.\\
\indent $\mathrm{(3)}$ $\mathrm{Hom}_{\mathcal{C}(R)}(X, D^{n}(M)) \cong \mathrm{Hom}_{R}(X_{n-1}, M)$.\\
\indent $\mathrm{(4)}$  $\mathrm{Hom}_{\mathcal{C}(R)}(X, S^{n}(M)) \cong \mathrm{Hom}_{R}(X_{n}/\mathrm{B}_{n}(X), M)$.
\end{lem}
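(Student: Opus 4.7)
The plan is to unpack each isomorphism by directly analyzing what a chain map into or out of the special complexes $D^{n}(M)$ and $S^{n}(M)$ amounts to degree by degree. Since $S^{n}(M)$ is concentrated in degree $n$ with zero differential, and $D^{n}(M)$ has nonzero data only in degrees $n$ and $n-1$ joined by $1_{M}$, every chain map is pinned down by a single module homomorphism, and the compatibility condition with the differential either forces or is forced by a cycle/boundary constraint. After establishing the bijections, the verification that each is $R$-linear and natural in both $M$ and $X$ is automatic from the formulas, so I would relegate it to a single sentence.

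For (1), I would write a chain map $f\colon D^{n}(M)\to X$ as a pair $(f_{n},f_{n-1})$ with $f_{n}, f_{n-1}\in \mathrm{Hom}_{R}(M,X_{n})$ and $\mathrm{Hom}_{R}(M,X_{n-1})$ respectively; the only nontrivial commutation of differentials reads $d^{X}_{n}\circ f_{n}=f_{n-1}\circ 1_{M}=f_{n-1}$, so $f_{n-1}$ is determined by the arbitrary choice of $f_{n}$, giving the bijection $f\mapsto f_{n}$. For (2), a chain map $f\colon S^{n}(M)\to X$ is just $f_{n}\colon M\to X_{n}$, and the compatibility $d^{X}_{n}f_{n}=0$ forces its image into $\mathrm{Z}_{n}(X)$, yielding $f\mapsto f_{n}\in \mathrm{Hom}_{R}(M,\mathrm{Z}_{n}(X))$. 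For (3), a chain map $f\colon X\to D^{n}(M)$ consists of $f_{n}\colon X_{n}\to M$ and $f_{n-1}\colon X_{n-1}\to M$; the commutation in degree $n$ gives $f_{n}=f_{n-1}\circ d^{X}_{n}$, while the commutation in degree $n+1$ gives $0=f_{n}\circ d^{X}_{n+1}=f_{n-1}\circ d^{X}_{n}\circ d^{X}_{n+1}=0$, which holds automatically; hence $f\mapsto f_{n-1}$ is the claimed bijection. For (4), a chain map $f\colon X\to S^{n}(M)$ is given by $f_{n}\colon X_{n}\to M$ satisfying $f_{n}\circ d^{X}_{n+1}=0$, i.e.\ $f_{n}$ vanishes on $\mathrm{B}_{n}(X)$ and therefore factors uniquely through $X_{n}/\mathrm{B}_{n}(X)$.

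The main obstacle, such as it is, lies only in keeping the indexing consistent across the four cases (in particular remembering that $D^{n}(M)$ has its identity differential going from degree $n$ to degree $n-1$, which is what shifts the target in (3) from $X_{n}$ to $X_{n-1}$). Once the degrees are tracked carefully, the identifications are tautological, and naturality in $M$ and $X$ is visible from the explicit formulas, so no further argument is needed.
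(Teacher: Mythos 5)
Your proof is correct; note that the paper itself gives no proof of this lemma, citing it directly from \cite[Lemma 3.1]{Gil04}, and your degree-by-degree unpacking of chain maps into and out of $D^{n}(M)$ and $S^{n}(M)$ is precisely the standard argument behind that citation. All four bijections check out, including the index shift in (3) forced by the differential of $D^{n}(M)$ running from degree $n$ to degree $n-1$; the one square you pass over in (1) (commutativity in degree $n-1$) is indeed automatic from $d^{2}=0$, exactly as you observe for the analogous square in (3).
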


Now we are in a position to prove the following result in Theorem B from the introduction.
In the next, we use both the subscript notation for degrees of complex and the superscript notation to distinguish
complexes: for example, if $(C^{i})_{i\in \mathbb{Z}}$ is a family of complexes, then $C^{i}_{n}$ denotes the degree-$n$
term of the complex $C^{i}$.

\begin{thm}\label{thm 3.6}
For a complex $G$, the following are equivalent:\\
\indent $\mathrm{(1)}$  $G$ is a $\mathrm{C}$-$\mathrm{E}$ $\mathcal{W}$-Gorenstein complex.\\
\indent $\mathrm{(2)}$  $G$ admits a $\mathrm{C}$-$\mathrm{E}$ complete $\mathcal{W}$ resolution.\\
\indent $\mathrm{(3)}$  $\mathrm{B}(G)$ and $\mathrm{H}(G)$ are complexes consisting of $\mathcal{W}$-Gorenstein modules.
\end{thm}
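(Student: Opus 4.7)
My plan is to prove the three-way equivalence via the cycle $(1) \Rightarrow (3) \Rightarrow (1)$ combined with $(1) \Leftrightarrow (2)$. The implication $(1) \Rightarrow (3)$ is immediate from Definition 3.1. For $(3) \Rightarrow (1)$, I use the observation already recorded in the preamble that $\mathcal{G}(\mathcal{W}) = \mathcal{G}^{2}(\mathcal{W})$, from which it follows that the class of $\mathcal{W}$-Gorenstein modules is closed under extensions. Applying this closure first to
$$0 \to \mathrm{B}_{n}(G) \to \mathrm{Z}_{n}(G) \to \mathrm{H}_{n}(G) \to 0 \quad \text{and then to} \quad 0 \to \mathrm{Z}_{n}(G) \to G_{n} \to \mathrm{B}_{n-1}(G) \to 0$$
promotes the $\mathcal{W}$-Gorenstein property from $\mathrm{B}(G)$ and $\mathrm{H}(G)$ first to $\mathrm{Z}(G)$ and then to $G$ itself.

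For $(2) \Rightarrow (1)$, let $\mathbb{W}$ be a C-E complete $\mathcal{W}$ resolution of $G$. By Proposition 2.3, each $W^{i}$ being a C-E $\mathcal{W}$ complex forces $W^{i}_{n}$, $\mathrm{Z}_{n}(W^{i})$, $\mathrm{B}_{n}(W^{i})$, $\mathrm{H}_{n}(W^{i})$ to lie in $\mathcal{W}$. The six exactness conditions of Definition 3.2 then produce four exact sequences of $\mathcal{W}$-modules whose kernels at position $-1$ recover $G_{n}$, $\mathrm{Z}_{n}(G)$, $\mathrm{B}_{n}(G)$, $\mathrm{H}_{n}(G)$ respectively. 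To confirm that these are complete $\mathcal{W}$-resolutions, I apply the Hom-exactness hypothesis on $\mathbb{W}$ against the test complexes $D^{n}(W)$ and $S^{n}(W)$, which are C-E $\mathcal{W}$ by Remark 2.2, and then invoke Lemma 3.5 to translate into module-level $\mathrm{Hom}_{R}(W, -)$- and $\mathrm{Hom}_{R}(-, W)$-exactness for every $W \in \mathcal{W}$, which is exactly the defining condition of a $\mathcal{W}$-Gorenstein module.

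For $(1) \Rightarrow (2)$, I construct $\mathbb{W}$ as follows. For each $n$, pick complete $\mathcal{W}$-resolutions $\mathbb{B}_{n}$ of $\mathrm{B}_{n}(G)$ and $\mathbb{H}_{n}$ of $\mathrm{H}_{n}(G)$, and apply the classical horseshoe construction to the two short exact sequences of (3)-(1) above to obtain complete $\mathcal{W}$-resolutions $\mathbb{Z}_{n}$ of $\mathrm{Z}_{n}(G)$ and then $\mathbb{G}_{n}$ of $G_{n}$. Because $\mathcal{W}$ is self-orthogonal, the resulting short exact sequences at each index $i$ split, yielding $G^{i}_{n} \cong B^{i}_{n} \oplus H^{i}_{n} \oplus B^{i}_{n-1}$ as modules. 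I then define $W^{i}$ as the complex with terms $W^{i}_{n} = G^{i}_{n}$ and with horizontal differentials chosen so that $W^{i}$ is identified with $\bigoplus_{n} D^{n}(B^{i}_{n-1}) \oplus \bigoplus_{n} S^{n}(H^{i}_{n})$, making it C-E $\mathcal{W}$ by Proposition 2.3. Stacking vertically via the differentials of $\mathbb{B}_{n}$ and $\mathbb{H}_{n}$ gives $\mathbb{W}$, whose C-E exactness follows degree-by-degree from the exactness of these complete resolutions.

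The hard part will be arranging that $G$ itself, not merely its formal split model $\bigoplus_{n} D^{n}(\mathrm{B}_{n-1}(G)) \oplus \bigoplus_{n} S^{n}(\mathrm{H}_{n}(G))$, is what appears as $\mathrm{Ker}(W^{-1} \to W^{-2})$ together with its original differential $d^{G}$: the sequences $0 \to \mathrm{Z}_{n}(G) \to G_{n} \to \mathrm{B}_{n-1}(G) \to 0$ generally do \emph{not} split for $\mathcal{W}$-Gorenstein modules, so the splittings of the horseshoe-constructed sequences at level $i = -1$ must be chosen coherently enough that the horizontal differential of $W^{-1}$, upon restriction, recovers $d^{G}$ on $G \subset W^{-1}$; this propagates upward through the resolution by the horseshoe differentials. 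Once this coherence is in place, the final step, verifying the Hom-exactness of $\mathbb{W}$ against an arbitrary C-E $\mathcal{W}$ test complex $V$, reduces via Proposition 2.3 to tests against $D^{n}(W)$ and $S^{n}(W)$, and then via Lemma 3.5 to the module-level Hom-exactness of $\mathbb{B}_{n}$ and $\mathbb{H}_{n}$ against modules in $\mathcal{W}$, which holds by their being complete $\mathcal{W}$-resolutions.
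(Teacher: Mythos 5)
Your proposal follows essentially the same route as the paper's proof: $(1)\Leftrightarrow(3)$ via closure of $\mathcal{G}(\mathcal{W})$ under extensions, $(2)\Rightarrow(1)$ by testing $\mathbb{W}$ against $D^{n}(M)$ and $S^{n}(M)$ through Lemma \ref{lem 3.5}, and $(1)\Rightarrow(2)$ by the Horseshoe construction applied to the two canonical short exact sequences, with horizontal differential $(x,y,z)\mapsto(z,0,0)$. The coherence issue you flag as ``the hard part'' resolves automatically (and is likewise left implicit in the paper): the Horseshoe structural maps $W^{G_{n}}\to W^{\mathrm{B}_{n-1}(G)}$ and $W^{\mathrm{B}_{n-1}(G)}\to W^{\mathrm{Z}_{n-1}(G)}\to W^{G_{n-1}}$ are chain maps of resolutions lifting the original morphisms $G_{n}\to \mathrm{B}_{n-1}(G)$ and $\mathrm{B}_{n-1}(G)\to \mathrm{Z}_{n-1}(G)\to G_{n-1}$, so their composite, which is exactly $d^{W^{-1}}$, restricts on $\mathrm{Ker}(W^{-1}\to W^{-2})$ to $d^{G}$ by naturality rather than by any choice of splittings.
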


\begin{proof}
(1)$\Longrightarrow$(2) For any $n\in \mathbb{Z}$, consider the exact sequence of modules
 $0\rightarrow \mathrm{B}_{n}(G)\rightarrow \mathrm{Z}_{n}(G)\rightarrow \mathrm{H}_{n}(G)\rightarrow 0$,
 where B$_{n}(G)$ and H$_{n}(G)$ are $\mathcal{W}$-Gorenstein. Suppose $W^{\mathrm{B}_{n}(G)}$ and
 $W^{\mathrm{H}_{n}(G)}$ are complete $\mathcal{W}$-resolutions of B$_{n}(G)$ and H$_{n}(G)$, respectively.
 By the Horseshoe Lemma, we can construct a complete $\mathcal{W}$ resolution of Z$_{n}(G)$:
 $W^{\mathrm{Z}_{n}(G)}= W^{\mathrm{B}_{n}(G)}\oplus W^{\mathrm{H}_{n}(G)}$. Similarly, consider
 the exact sequence of modules
 $0\rightarrow \mathrm{Z}_{n}(G)\rightarrow G_{n}\rightarrow \mathrm{B}_{n-1}(G)\rightarrow 0$,
 and we can construct a complete $\mathcal{W}$-resolution of $G_{n}$:
 $$W^{G_{n}} = W^{\mathrm{Z}_{n}(G)}\oplus W^{\mathrm{B}_{n-1}(G)} =
 W^{\mathrm{B}_{n}(G)}\oplus W^{\mathrm{H}_{n}(G)}\oplus W^{\mathrm{B}_{n-1}(G)}.$$
 Set
 $W^{i}_{n} = W_{i}^{\mathrm{B}_{n}(G)}\oplus W_{i}^{\mathrm{H}_{n}(G)}\oplus W_{i}^{\mathrm{B}_{n-1}(G)}$ and
 $d_{n}^{W^{i}}: W^{i}_{n}\rightarrow W^{i}_{n-1}$ which maps $(x, y, z)$ to  $(z, 0, 0)$ for all $i, n\in \mathbb{Z}$.
 Then $(W^{i}, d^{W^{i}})$ is a complex such that $G_{n} = \mathrm{Ker}(W^{-1}_{n}\rightarrow W^{-2}_{n})$.

Consider the complex of complexes
$$\mathbb{W} = \cdots\longrightarrow W^{1}\longrightarrow W^{0}\longrightarrow  W^{-1}\longrightarrow  W^{-2}
 \longrightarrow \cdots$$
For any $n\in \mathbb{Z}$, $\mathbb{W}_{n} = \cdots\rightarrow W^{1}_{n}\rightarrow W^{0}_{n} \rightarrow W^{-1}_{n} \rightarrow \cdots$
 is a complete $\mathcal{W}$-resolution of $G_{n}$, and $\mathrm{Z}_{n}(\mathbb{W}) = \cdots \rightarrow \mathrm{Z}_{n}(W^{1}) \rightarrow
 \mathrm{Z}_{n}(W^{0}) \rightarrow \mathrm{Z}_{n}(W^{-1}) \rightarrow \cdots$ is a complete $\mathcal{W}$-resolution
 of Z$_{n}(G)$, so they both are exact. Hence, we can get that $\mathbb{W}$
 is C-E exact. It is easily seen that $W^{i}$ is a C-E $\mathcal{W}$ complex for all $i\in \mathbb{Z}$
 and $G = \mathrm{Ker}(W^{-1}\rightarrow W^{-2})$. It remains to prove that $\mathbb{W}$ is still exact when
 Hom$_{\mathcal{C}(R)}(V, -)$ and Hom$_{\mathcal{C}(R)}(-, V)$ applied to it for any C-E $\mathcal{W}$ complex $V$.

 However, it suffices to prove, by Proposition \ref{prop 2.3}, that the assertion holds when we pick $V$ particularly as $V = D^{n}(M)$ and
 $V = S^{n}(M)$ for any module $M\in \mathcal{W}$ and all $n\in \mathbb{Z}$. By Lemma \ref{lem 3.5}, there is a natural isomorphism
 $\mathrm{Hom}_{\mathcal{C}(R)}(D^{n}(M), \mathbb{W})\cong \mathrm{Hom}_{R}(M, \mathbb{W}_{n})$, i.e. there is a commutative diagram
 $$\xymatrix@C=8pt{
 \cdots \ar[r] & \mathrm{Hom}_{C(R)}(D^{n}(M), W^{1}) \ar[d]^{\cong} \ar[r]
 & \mathrm{Hom}_{C(R)}(D^{n}(M), W^{0}) \ar[d]^{\cong} \ar[r]
 & \mathrm{Hom}_{C(R)}(D^{n}(M), W^{-1}) \ar[d]^{\cong} \ar[r] &\cdots\\
\cdots \ar[r] & \mathrm{Hom}_{R}(M, W^{1}_{n}) \ar[r]
 & \mathrm{Hom}_{R}(M, W^{0}_{n}) \ar[r]
 & \mathrm{Hom}_{R}(M, W^{-1}_{n}) \ar[r] &\cdots
 }$$
 Note that $\mathbb{W}_{n}$ is a complete $\mathcal{W}$-resolutions of $G_n$, then the exactness of the upper row follows since the
 bottom row is exact. Similarly, we have exactness of $\mathrm{Hom}_{\mathcal{C}(R)}(\mathbb{W}, D^{n}(M))$ by
 $\mathrm{Hom}_{\mathcal{C}(R)}(\mathbb{W}, D^{n}(M))\cong \mathrm{Hom}_{R}(\mathbb{W}_{n-1}, M)$, and exactness of
 $\mathrm{Hom}_{\mathcal{C}(R)}(S^{n}(M), \mathbb{W})$ by
 $\mathrm{Hom}_{\mathcal{C}(R)}(S^{n}(M), \mathbb{W})\cong \mathrm{Hom}_{R}(M, \mathrm{Z}_{n}(\mathbb{W}))$.

Moreover, it yields from the exact sequence
 $0\rightarrow \mathrm{H}_{n}(W^{i})\rightarrow W^{i}_{n}/\mathrm{B}_{n}(W^{i})\rightarrow \mathrm{B}_{n-1}(W^{i})\rightarrow 0$
that $W^{i}_{n}/\mathrm{B}_{n}(W^{i})\in \mathcal{W}$, and then there exists an exact sequence of complexes
$$0\longrightarrow \mathrm{B}_{n}(\mathbb{W})\longrightarrow \mathbb{W}_{n}\longrightarrow
 \mathbb{W}_{n}/\mathrm{B}_{n}(\mathbb{W})\longrightarrow 0,$$ which is split degreewise.
Noting that $\mathrm{B}_{n}(\mathbb{W})$ is a complete $\mathcal{W}$-resolution
 of B$_{n}(G)$, the complex $\mathrm{Hom}_{R}(\mathrm{B}_{n}(\mathbb{W}), M)$ is exact. From the exact sequence of $\mathbb{Z}$-complexes
 $$0\longrightarrow \mathrm{Hom}_{R}(\mathbb{W}_{n}/\mathrm{B}_{n}(\mathbb{W}), M)\longrightarrow
 \mathrm{Hom}_{R}(\mathbb{W}_{n}, M)\longrightarrow  \mathrm{Hom}_{R}(\mathrm{B}_{n}(\mathbb{W}), M)\longrightarrow 0,$$
it yields that $\mathrm{Hom}_{R}(\mathbb{W}_{n}/\mathrm{B}_{n}(\mathbb{W}), M)$ is exact since the other two items are so.
Hence, $\mathrm{Hom}_{\mathcal{C}(R)}(\mathbb{W}, S^{n}(M))\cong \mathrm{Hom}_{R}(\mathbb{W}_{n}/\mathrm{B}_{n}(\mathbb{W}), M)$ is exact.

 (2)$\Longrightarrow$ (1)  Suppose that
 $$\mathbb{W} = \cdots\longrightarrow W^{1}\longrightarrow W^{0}\longrightarrow  W^{-1}\longrightarrow  W^{-2}\longrightarrow \cdots$$
 is a C-E complete $\mathcal{W}$-resolution such that $G = \mathrm{Ker}\,(W^{-1}\rightarrow W^{-2})$.
Let $M\in\mathcal{W}$. It follows by the exactness of
$\mathrm{Hom}_{\mathcal{C}(R)}(D^{n}(M), \mathbb{W})$ and $\mathrm{Hom}_{\mathcal{C}(R)}(\mathbb{W}, D^{n+1}(M))$ that
$$\mathbb{W}_{n} = \cdots\longrightarrow W^{1}_{n}\longrightarrow W^{0}_{n}\longrightarrow  W^{-1}_{n}\longrightarrow  W^{-2}_{n}\longrightarrow \cdots$$
remains exact after applying $\mathrm{Hom}_{R}(M, -)$ and $\mathrm{Hom}_{R}(-, M)$. Then, $\mathbb{W}_{n}$
is a complete $\mathcal{W}$-resolution of $G_{n} = \mathrm{Ker}(W^{-1}_{n}\rightarrow W^{-2}_{n})$,
and thus $G$ is a complex of $\mathcal{W}$-Gorenstein modules.

We now argue that $\mathrm{Z}(G)$ is also a complex of $\mathcal{W}$-Gorenstein modules. So we must show that
$\mathrm{Z}_{n}(G)$ is $\mathcal{W}$-Gorenstein for any $n$. We have a candidate for a complete $\mathcal{W}$-resolution of
$\mathrm{Z}_{n}(G)$, namely  an exact sequence
$$\mathrm{Z}_{n}(\mathbb{W}) = \cdots \longrightarrow \mathrm{Z}_{n}(W^{1}) \longrightarrow
 \mathrm{Z}_{n}(W^{0}) \longrightarrow \mathrm{Z}_{n}(W^{-1}) \longrightarrow \cdots$$ with
 each module $\mathrm{Z}_{n}(W^{i})\in \mathcal{W}$, such that
 $\mathrm{Z}_{n}(G) = \mathrm{Ker}(\mathrm{Z}_{n}(W^{-1})\rightarrow \mathrm{Z}_{n}(W^{-2}))$.
Clearly, $\mathrm{Hom}_{R}(M, \mathrm{Z}_{n}(\mathbb{W})) \cong \mathrm{Hom}_{\mathcal{C}(R)}(S^{n}(M), \mathbb{W})$ is exact.
Moreover, in the exact sequence
$0\rightarrow \mathrm{Hom}_{R}(\mathbb{W}_{n}/\mathrm{B}_{n}(\mathbb{W}), M)\rightarrow
 \mathrm{Hom}_{R}(\mathbb{W}_{n}, M)\rightarrow  \mathrm{Hom}_{R}(\mathrm{B}_{n}(\mathbb{W}), M)\rightarrow 0$
of complexes,  $\mathrm{Hom}_{R}(\mathbb{W}_{n}, M)$ and $\mathrm{Hom}_{R}(\mathbb{W}_{n}/\mathrm{B}_{n}(\mathbb{W}), M)\cong
\mathrm{Hom}_{\mathcal{C}(R)}(\mathbb{W}, S^{n}(M))$ are exact,
then so is $\mathrm{Hom}_{R}(\mathrm{B}_{n}(\mathbb{W}), M)$.
Now consider the following exact sequence of complexes
 $$0\longrightarrow \mathrm{Z}_{n}(\mathbb{W})\longrightarrow \mathbb{W}_{n}\longrightarrow
 \mathrm{B}_{n-1}(\mathbb{W})\longrightarrow 0,$$
 which is split degreewised and yields an exact sequence of $\mathbb{Z}$-complexes
 $$0\longrightarrow \mathrm{Hom}_{R}(\mathrm{B}_{n-1}(\mathbb{W}), M)\longrightarrow
 \mathrm{Hom}_{R}(\mathbb{W}_{n}, M)\longrightarrow  \mathrm{Hom}_{R}(\mathrm{Z}_{n}(\mathbb{W}), M)\longrightarrow 0.$$
Then $\mathrm{Hom}_{R}(\mathrm{Z}_{n}(\mathbb{W}), M)$ is exact.
This implies that $\mathrm{Z}_{n}(G)$ is $\mathcal{W}$-Gorenstein.

Furthermore, we have from the exact sequence $0\rightarrow \mathrm{Z}_{n+1}(\mathbb{W})\rightarrow \mathbb{W}_{n+1}\rightarrow
 \mathrm{B}_{n}(\mathbb{W})\rightarrow 0$ that $\mathrm{B}_{n}(\mathbb{W})$ is a complete $\mathcal{W}$-resolution of
 $\mathrm{B}_{n}(G)$, and then from $0\rightarrow\mathrm{B}_{n}(\mathbb{W})\rightarrow \mathrm{Z}_{n}(\mathbb{W})\rightarrow
 \mathrm{H}_{n}(\mathbb{W})\rightarrow 0$ that $\mathrm{H}_{n}(\mathbb{W})$ is a complete $\mathcal{W}$-resolution of
$\mathrm{H}_{n}(G)$. Hence, $\mathrm{B}(G)$ and $\mathrm{H}(G)$ are both complexes of $\mathcal{W}$-Gorenstein modules.

(1)$\Longrightarrow$(3) is trivial.

(3)$\Longrightarrow$(1) Since the class $\mathcal{G}(\mathcal{W})$ of $\mathcal{W}$-Gorenstein modules is closed
under extensions \cite[Corollary 4.5]{SWSW08}, the assertion follows from the exact sequences
 $0\rightarrow \mathrm{B}_{n}(G)\rightarrow \mathrm{Z}_{n}(G)\rightarrow \mathrm{H}_{n}(G)\rightarrow 0$
and  $0\rightarrow \mathrm{Z}_{n}(G)\rightarrow G_{n}\rightarrow \mathrm{B}_{n-1}(G)\rightarrow 0$.
\end{proof}

It is an important question to establish relationships between a complex $X$ and the modules $X_{n}, n\in\mathbb{Z}$.
If $R$ is an $n$-Gorenstein ring, Enochs and Garcia Rozas showed in \cite{EGR98} that a complex
$X$ is Gorenstein projective (resp. Gorenstein injective) if and only if $X_{n}$ is a Gorenstein projective
(resp. Gorenstein injective) module for any $n\in\mathbb{Z}$. This has been further developed by Liu and Zhang \cite{LZ09}
and Yang \cite{Y11}, and now we know that the same result holds over any associative ring.

In \cite[Section 5.1]{Liang}, the author introduced the notion of $\widetilde{\mathcal{W}}$-Gorenstein complex,
analogous to the definition of $\mathcal{W}$-Gorenstein module,
by replacing the modules in $\mathcal{W}$ with complexes in $\widetilde{\mathcal{W}}$. It is proved that
(\cite[Sect. 5.1, Theorem A]{Liang}): a complex $X$ is
$\widetilde{\mathcal{W}}$-Gorenstein if and only if $X_{n}$ is a $\mathcal{W}$-Gorenstein module for each $n\in \mathbb{Z}$.

\begin{cor}\label{cor 3.7}
For a complex $G$, the following are equivalent:\\
\indent $(\mathrm{1})$  $G$ is a $\mathrm{C}$-$\mathrm{E}$ $\mathcal{W}$-Gorenstein complex.\\
\indent $(\mathrm{2})$  $G$, $\mathrm{Z}(G)$, $\mathrm{B}(G)$ and $\mathrm{H}(G)$ are $\widetilde{\mathcal{W}}$-Gorenstein complexes.\\
\indent $(\mathrm{3})$  $\mathrm{B}(G)$ and  $\mathrm{H}(G)$ are $\widetilde{\mathcal{W}}$-Gorenstein complexes.
\end{cor}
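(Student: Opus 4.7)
The plan is to reduce Corollary \ref{cor 3.7} immediately to Theorem \ref{thm 3.6} by way of the cited result of Liang. The key external input is \cite[Sect. 5.1, Theorem A]{Liang}, which says that a complex $X$ is $\widetilde{\mathcal{W}}$-Gorenstein if and only if every term $X_n$ is a $\mathcal{W}$-Gorenstein module. Applying this termwise converts the module-theoretic conditions in Theorem \ref{thm 3.6} into the complex-theoretic conditions of Corollary \ref{cor 3.7}.

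Concretely, I would argue as follows. First, for (1)$\Longleftrightarrow$(2): by Definition \ref{df 3.1}, $G$ is a C-E $\mathcal{W}$-Gorenstein complex precisely when each of the complexes $G$, $\mathrm{Z}(G)$, $\mathrm{B}(G)$ and $\mathrm{H}(G)$ is built from $\mathcal{W}$-Gorenstein modules; by Liang's theorem this is equivalent to each of those four complexes being $\widetilde{\mathcal{W}}$-Gorenstein, which is (2). Next, (2)$\Longrightarrow$(3) is a tautology, since (3) retains only two of the four conditions in (2). Finally, for (3)$\Longrightarrow$(1), Liang's theorem applied to $\mathrm{B}(G)$ and $\mathrm{H}(G)$ yields that $\mathrm{B}_n(G)$ and $\mathrm{H}_n(G)$ are $\mathcal{W}$-Gorenstein modules for every $n$; this is exactly hypothesis (3) of Theorem \ref{thm 3.6}, whose implication (3)$\Longrightarrow$(1) gives that $G$ is a C-E $\mathcal{W}$-Gorenstein complex.

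There is essentially no obstacle here: the work has already been done in Theorem \ref{thm 3.6} (which handled the nontrivial passage from $\mathcal{W}$-Gorensteinness of $\mathrm{B}(G)$ and $\mathrm{H}(G)$ to that of $\mathrm{Z}(G)$ and $G$ via the split short exact sequences $0\to \mathrm{B}_n(G)\to \mathrm{Z}_n(G)\to \mathrm{H}_n(G)\to 0$ and $0\to \mathrm{Z}_n(G)\to G_n\to \mathrm{B}_{n-1}(G)\to 0$ and the extension-closure of $\mathcal{G}(\mathcal{W})$), and Liang's theorem is the bridge between ``termwise $\mathcal{W}$-Gorenstein'' and ``$\widetilde{\mathcal{W}}$-Gorenstein''. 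The only mild care needed is in pointing out, in the (1)$\Longleftrightarrow$(2) step, that one applies Liang's characterisation separately to each of the four complexes $G$, $\mathrm{Z}(G)$, $\mathrm{B}(G)$, $\mathrm{H}(G)$, so that the corollary is a clean packaging of Theorem \ref{thm 3.6} in the language of $\widetilde{\mathcal{W}}$-Gorenstein complexes rather than $\mathcal{W}$-Gorenstein modules.
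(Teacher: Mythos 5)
Your proposal is correct and matches the paper's (implicit) argument: the corollary is stated without a separate proof precisely because it follows from Theorem \ref{thm 3.6} combined with the cited result \cite[Sect.\ 5.1, Theorem A]{Liang} identifying $\widetilde{\mathcal{W}}$-Gorenstein complexes with complexes whose terms are all $\mathcal{W}$-Gorenstein modules, which is exactly the reduction you carry out.
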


In particular, if we set $\mathcal{W}$ to be the class of injective modules $\mathcal{I}$, then
\begin{cor}\label{cor 3.8}(\cite[Theorem 8.5]{E11})
For a complex $G$, the following are equivalent:\\
\indent $(\mathrm{1})$  $G$ has a $\mathrm{C}$-$\mathrm{E}$ complete injective resolution.\\
\indent $(\mathrm{2})$  $G$, $\mathrm{Z}(G)$, $\mathrm{B}(G)$ and $\mathrm{H}(G)$ are complexes consisting of Gorenstein injective modules.
\end{cor}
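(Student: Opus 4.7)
The plan is to obtain Corollary 3.8 as a direct specialization of Theorem 3.6 applied to the class $\mathcal{W} = \mathcal{I}$ of injective $R$-modules. The work is therefore almost entirely bookkeeping: I need to verify that $\mathcal{I}$ satisfies the blanket hypotheses on the class $\mathcal{W}$, and to match up the terminology so that conditions (1) and (2) in the corollary are seen to be precisely conditions (2) and (1) of Theorem 3.6 in this special case.

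First I would verify that $\mathcal{I}$ is admissible as a choice of $\mathcal{W}$. Self-orthogonality is immediate, since $\mathrm{Ext}^{i}_{R}(I, I') = 0$ for all injective modules $I, I'$ and all $i \geq 1$. Closure under direct summands and finite direct sums of $\mathcal{I}$ is standard. For closure under extensions, any short exact sequence $0 \to I \to B \to I' \to 0$ with $I$ injective splits, so $B \cong I \oplus I'$ is injective. Thus $\mathcal{W} = \mathcal{I}$ meets the running assumptions imposed at the start of Section~2.

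Next, I would translate the terminology. By Definition of $\mathcal{W}$-Gorenstein module (the definition recalled at the start of Section~3), when $\mathcal{W} = \mathcal{I}$ the $\mathcal{W}$-Gorenstein modules are precisely the Gorenstein injective modules in the usual Enochs sense. Accordingly, by Definition \ref{df 3.1}, a C-E $\mathcal{W}$-Gorenstein complex is precisely a complex $G$ such that $G$, $\mathrm{Z}(G)$, $\mathrm{B}(G)$ and $\mathrm{H}(G)$ all have Gorenstein injective entries, which is condition (2) of Corollary 3.8. Likewise, a C-E $\mathcal{W}$ complex becomes a C-E injective complex in the sense of \cite{E11}, and a C-E complete $\mathcal{W}$-resolution as in Definition \ref{df 3.4} specializes to a C-E complete injective resolution, so condition (1) of Corollary 3.8 is exactly condition (2) of Theorem \ref{thm 3.6}.

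With these identifications in place, the equivalence (1)$\Leftrightarrow$(2) of Corollary 3.8 is simply the equivalence (2)$\Leftrightarrow$(1) of Theorem \ref{thm 3.6} applied to $\mathcal{W} = \mathcal{I}$, and no further argument is required. The only thing that might deserve a sentence of comment is the check that Enochs's notion of C-E complete injective resolution in \cite[Theorem 8.5]{E11} really coincides with Definition \ref{df 3.4} for $\mathcal{W} = \mathcal{I}$, but this is immediate from the definitions of both C-E exactness (Definition \ref{df 3.2}) and the $\mathrm{Hom}_{\mathcal{C}(R)}$-exactness condition. There is no real obstacle; the content of the corollary is entirely absorbed by Theorem \ref{thm 3.6}.
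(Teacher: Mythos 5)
Your proposal is correct and matches the paper's (implicit) argument exactly: the paper presents Corollary~\ref{cor 3.8} as the immediate specialization of Theorem~\ref{thm 3.6} to $\mathcal{W}=\mathcal{I}$, with no further proof given. Your verification that $\mathcal{I}$ satisfies the running hypotheses on $\mathcal{W}$ and your matching of conditions (1) and (2) with conditions (2) and (1) of Theorem~\ref{thm 3.6} is precisely the bookkeeping the paper leaves to the reader.
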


We will not state here, but there are dual results about C-E Gorenstein projective complexes if $\mathcal{W}$ is
the class of projective modules $\mathcal{P}$.
In particular, set $\mathcal{W} = \mathcal{P}_{C}$ and
$\mathcal{W} = \mathcal{I}_{C}$ respectively, where $C$ is a given semi-dualizing module over a commutative notherian ring $R$.
In \cite{GD11}, $\mathcal{P}_{C}$-Gorenstein and $\mathcal{I}_{C}$-Gorenstein modules are named respectively
$C$-Gorenstein projective and $C$-Gorenstein injective modules. Accordingly,
C-E $\mathcal{P}_{C}$-Gorenstein and C-E $\mathcal{I}_{C}$-Gorenstein complexes are called C-E $C$-Gorenstein projective
and C-E $C$-Gorenstein injective complexes respectively.

\section{\bf Foxby equivalence}
This section is devoted to prove Theorem A from the introduction. The proof is divided into the following results.

Throughout this section, $R$ is a noetherian ring, and $C$ is a given semi-dualizing module over $R$.
Foxby \cite{Fox72} studied modules in Auslander class $\mathcal{A}_{C}^{0}(R)$ and Bass class $\mathcal{B}_{C}^{0}(R)$, where
$$\mathcal{A}_{C}^{0}(R)= \left\{M \in R\text{-}\mathrm{Mod}\, \vline \begin{matrix}\, \textrm{Tor}^{R}_{i}(C, M) = 0 = \mathrm{Ext}^{i}_{R}(C, C\otimes_{R}M),
\text{ and the }\\ \, \textrm{ map }
 M\longrightarrow
 \mathrm{Hom}_{R}(C,  C\otimes_{R}M) \text{ is an isomorphism}
 \end{matrix} \right\},
$$
and
$$\mathcal{B}_{C}^{0}(R)= \left\{N \in R\text{-}\mathrm{Mod}\, \vline \begin{matrix}\, \mathrm{Ext}_{R}^{i}(C, N) = 0 = \textrm{Tor}_{i}^{R}(C, \mathrm{Hom}_{R}(C, N)),
\text{ and the }\\ \, \textrm{ map }
 C\otimes_{R}\mathrm{Hom}_{R}(C,  N)\longrightarrow N
  \text{ is an isomorphism}
 \end{matrix} \right\}.
$$
We always take $C$ as a complex concentrated in degree zero, and then it is a semi-dualizing
complex in the sense of \cite[Definition 2.1]{Chr01}.
By \cite[Observation 4.10]{Chr01}, $\mathcal{A}_{C}^{0}(R)$ and $\mathcal{B}_{C}^{0}(R)$
coincide with the subcategories
of $\mathcal{A}_{C}(R)$ and $\mathcal{B}_{C}(R)$ consisting of $R$-complexes concentrated in degree zero, respectively.

\begin{df}\label{df 4.1}
Let $C$ be a semi-dualizing module over a noetherian ring $R$.
The $\mathrm{C}$-$\mathrm{E}$ Auslander class and $\mathrm{C}$-$\mathrm{E}$ Bass class with respect to $C$,
denoted by $\mathrm{CE}$-$\mathcal{A}_{C}(R)$ and $\mathrm{CE}$-$\mathcal{B}_{C}(R)$, are the full subcategories of $\mathcal{D}_{b}(R)$
whose objects are specified as follows:
$$\mathrm{CE}\text{-}\mathcal{A}_{C}(R)= \left\{X\in \mathcal{D}_{b}(R) \, \vline \,\begin{matrix}  X\simeq A, \text{where }
A, \mathrm{Z}(A), \mathrm{B}(A) \text{ and }\mathrm{H}(A) \text{ are}\\
\text{complexes consisting of modules in } \mathcal{A}_{C}^{0}(R)
\end{matrix} \right\}
$$
and
$$\mathrm{CE}\text{-}\mathcal{B}_{C}(R)= \left\{X\in \mathcal{D}_{b}(R) \, \vline \,\begin{matrix}  X\simeq D, \text{where }
D, \mathrm{Z}(D), \mathrm{B}(D) \text{ and }\mathrm{H}(D) \text{ are}\\
\text{complexes consisting of modules in } \mathcal{B}_{C}^{0}(R)
\end{matrix} \right\}.$$
\end{df}

\begin{prop}\label{prop 4.2}
Let $R$ be a noetherian ring with a semi-dualizing module $C$, $X$ an $R$-complex.
If $X\in \mathrm{CE}\text{-}\mathcal{A}_{C}(R)$, then the complexes $X$, $\mathrm{Z}(X)$, $\mathrm{B}(X)$ and $\mathrm{H}(X)$
are all in $\mathcal{A}_{C}(R)$. Dually, if $X\in \mathrm{CE}\text{-}\mathcal{B}_{C}(R)$, then the complexes
$X$, $\mathrm{Z}(X)$, $\mathrm{B}(X)$ and $\mathrm{H}(X)$
are all in $\mathcal{B}_{C}(R)$.
\end{prop}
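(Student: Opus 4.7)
\medskip
\noindent The plan is to pick a clean bounded representative of $X$ and then argue by purely triangulated closure. Let $A$ be a representative of $X$ as in Definition~\ref{df 4.1}. Since $X\in\mathcal{D}_{b}(R)$, pick $n\leq m$ with $\mathrm{H}_{i}(X)=0$ outside $[n,m]$ and replace $A$ by $\tau_{\leq m}\tau_{\geq n}A$. Each good truncation preserves the C-E hypothesis: the new boundary-degree terms that appear are $\mathrm{Z}_{n}(A)$, $\mathrm{B}_{n}(A)$, $\mathrm{H}_{n}(A)$ and $A_{m}/\mathrm{B}_{m}(A)$, all of which lie in $\mathcal{A}_{C}^{0}(R)$ by hypothesis together with the two-out-of-three property of $\mathcal{A}_{C}^{0}(R)$ applied to $0\to \mathrm{B}_{m}(A)\to A_{m}\to A_{m}/\mathrm{B}_{m}(A)\to 0$. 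So I may assume $A$ is bounded. Throughout I shall use two facts extracted from the excerpt: (i) $\mathcal{A}_{C}^{0}(R)\subseteq \mathcal{A}_{C}(R)$ via the degree-zero embedding (Observation 4.10 of \cite{Chr01}), and (ii) $\mathcal{A}_{C}(R)$ is a triangulated subcategory of $\mathcal{D}_{b}(R)$, hence closed under shifts, finite direct sums, and extensions.

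I next handle the three auxiliary complexes $\mathrm{Z}(A)$, $\mathrm{B}(A)$, $\mathrm{H}(A)$. Each has zero differential and bounded graded support, with every term in $\mathcal{A}_{C}^{0}(R)$, and therefore decomposes in $\mathcal{C}(R)$ as a finite direct sum $\bigoplus_{n} S^{n}(M_{n})$ with $M_{n}\in\mathcal{A}_{C}^{0}(R)$. Each such summand lies in $\mathcal{A}_{C}(R)$ by (i) and closure under shifts, and so by closure under finite direct sums all three complexes $\mathrm{Z}(A)$, $\mathrm{B}(A)$, $\mathrm{H}(A)$ lie in $\mathcal{A}_{C}(R)$. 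For $A$ itself I use the canonical degreewise-split short exact sequence of $R$-complexes
$$0\longrightarrow \mathrm{Z}(A)\longrightarrow A\longrightarrow A/\mathrm{Z}(A)\longrightarrow 0,$$
noting that the cokernel has zero differential (since $\mathrm{Im}\,d_{n}\subseteq \mathrm{Z}_{n-1}(A)$) and the differential of $A$ induces canonical isomorphisms $A_{n}/\mathrm{Z}_{n}(A)\cong \mathrm{B}_{n-1}(A)$, identifying $A/\mathrm{Z}(A)$ with a shift of $\mathrm{B}(A)$. The induced distinguished triangle in $\mathcal{D}(R)$ places $A$ between two objects already shown to belong to $\mathcal{A}_{C}(R)$, and (ii) delivers $A\in \mathcal{A}_{C}(R)$.

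The Bass class statement follows by verbatim dualization, substituting $\mathcal{B}_{C}^{0}(R)\subseteq \mathcal{B}_{C}(R)$ and the triangulated structure of $\mathcal{B}_{C}(R)$ in place of (i) and (ii). The main point requiring any actual verification is that truncation preserves the C-E hypothesis, which boils down to the two-out-of-three property of $\mathcal{A}_{C}^{0}(R)$ and $\mathcal{B}_{C}^{0}(R)$; once this reduction and the identification of $A/\mathrm{Z}(A)$ with a shift of $\mathrm{B}(A)$ are in hand, the rest of the argument is a formal manipulation inside the triangulated categories $\mathcal{A}_{C}(R)$ and $\mathcal{B}_{C}(R)$.
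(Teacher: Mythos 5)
Your argument is correct for the chosen representative $A$, and it takes a genuinely different route from the paper's. The paper verifies the defining conditions of $\mathcal{A}_{C}(R)$ by hand: it picks a projective resolution $P^{\bullet}\to C$, uses the termwise vanishing $\mathrm{Tor}^{R}_{i}(C,A_{n})=0$ and $\mathrm{Ext}^{i}_{R}(C,C\otimes_{R}A_{n})=0$ to identify $C\otimes^{\mathbf{L}}_{R}A$ with $C\otimes_{R}A$ and $\mathbf{R}\mathrm{Hom}_{R}(C,C\otimes^{\mathbf{L}}_{R}A)$ with $\mathrm{Hom}_{R}(C,C\otimes_{R}A)$, and then observes that the unit is a degreewise isomorphism. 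You instead push everything through the degree-zero embedding $\mathcal{A}_{C}^{0}(R)\subseteq\mathcal{A}_{C}(R)$ and triangulated closure: the zero-differential complexes $\mathrm{Z}(A)$, $\mathrm{B}(A)$, $\mathrm{H}(A)$ are finite direct sums of stalks $S^{n}(M)$ with $M\in\mathcal{A}_{C}^{0}(R)$, and $A$ sits in the triangle coming from $0\to\mathrm{Z}(A)\to A\to A/\mathrm{Z}(A)\to 0$ with $A/\mathrm{Z}(A)\cong\mathrm{B}(A)[1]$. This is shorter and avoids the resolution bookkeeping, at the cost of leaning entirely on the cited facts that $\mathcal{A}_{C}^{0}(R)$ is the degree-zero part of $\mathcal{A}_{C}(R)$ and that $\mathcal{A}_{C}(R)$ is triangulated. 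One local slip: that short exact sequence is \emph{not} degreewise split in general --- there is no Ext-vanishing within $\mathcal{A}_{C}^{0}(R)$ to split $0\to\mathrm{Z}_{n}(A)\to A_{n}\to\mathrm{B}_{n-1}(A)\to 0$ (contrast Proposition \ref{prop 2.3}, where self-orthogonality of $\mathcal{W}$ is exactly what forces splitting). This is harmless, since any short exact sequence of complexes yields a distinguished triangle in $\mathcal{D}(R)$, but you should drop the word ``degreewise-split.''

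The substantive issue is that the proposition asserts $\mathrm{Z}(X),\mathrm{B}(X)\in\mathcal{A}_{C}(R)$ for the given complex $X$, whereas your proof only delivers $\mathrm{Z}(A),\mathrm{B}(A)\in\mathcal{A}_{C}(R)$ for the representative. Unlike $\mathrm{H}(-)$, the functors $\mathrm{Z}(-)$ and $\mathrm{B}(-)$ are not invariants of the quasi-isomorphism class, so this is not a cosmetic difference. The paper does attack $\mathrm{Z}(X)$ and $\mathrm{B}(X)$ directly, via a contradiction argument using the two triangles $\mathrm{B}(X)\to\mathrm{Z}(X)\to\mathrm{H}(X)\to$ and $\mathrm{Z}(X)\to X\to\mathrm{B}(X)[1]\to$; note, however, that since $\mathrm{Z}$ and $\mathrm{B}$ occur together in both triangles, two-out-of-three only yields the biconditional ``$\mathrm{Z}(X)\in\mathcal{A}_{C}(R)$ iff $\mathrm{B}(X)\in\mathcal{A}_{C}(R)$,'' and the literal statement is in any case delicate (an unbounded contractible $X$ lies in $\mathrm{CE}\text{-}\mathcal{A}_{C}(R)$ while $\mathrm{Z}(X)$ need not even be homologically bounded). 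Your stalk-decomposition of the zero-differential complexes is precisely the extra input needed to break that symmetry --- but it is only available for the representative $A$. So either state explicitly that you prove the claim for the C-E representative (which is what Definition \ref{df 4.1} supports and what the later applications use), or add the hypothesis that $X$ itself is a complex with $X$, $\mathrm{Z}(X)$, $\mathrm{B}(X)$, $\mathrm{H}(X)$ termwise in $\mathcal{A}_{C}^{0}(R)$.
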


\begin{proof}
Let $X\in\mathrm{CE}\text{-}\mathcal{A}_{C}(R)$. Then there exists an isomorphism $X\simeq A$ in
$\mathcal{D}(R)$, where $A$ is a complex such that $A$, $\mathrm{Z}(A)$, $\mathrm{B}(A)$ and $\mathrm{H}(A)$ are
complexes consisting of modules in $\mathcal{A}_{C}^{0}(R)$. Set $\mathrm{sup}(X)=s$, $\mathrm{inf}(X) = i$.
Consider a truncated complex
$$A_{(s, i)} = 0\longrightarrow A_{s}/\mathrm{B}_{s}(A)\longrightarrow  A_{s-1}\longrightarrow  \cdots \longrightarrow A_{i+1}\longrightarrow \mathrm{Z}_{i}(A)\longrightarrow 0.$$
By noting that in the exact sequence $0\rightarrow \mathrm{B}_{s}(A)\rightarrow A_{s}\rightarrow A_{s}/\mathrm{B}_{s}(A)\rightarrow 0$
the first two entries are in $\mathcal{A}_{C}^{0}(R)$, the third is also in $\mathcal{A}_{C}^{0}(R)$, and then we have
$A\simeq A_{(s, i)}\in \mathrm{CE}\text{-}\mathcal{A}_{C}(R)$. Thus, we may choose $A$ to be a bounded complex consisting of modules in
$\mathcal{A}_{C}^{0}(R)$.

Let $\alpha: P^{\bullet}\rightarrow C$
be a projective resolution of the semi-dualizing module $C$, where
$P^{\bullet} = \cdots \rightarrow P_{1}\rightarrow P_{0}\rightarrow 0$.
Then $\alpha$ is a quasi-isomorphism, and we represent
$C\otimes^{\mathbf{L}}_{R}X \simeq C\otimes^{\mathbf{L}}_{R}A$ by the complex $P^{\bullet}\otimes_{R}A$. For any $n\in \mathbb{Z}$,
it follows from Tor$^{i}_{R}(C, A_{n}) = 0$ that $\alpha\otimes_{R}A_{n}: P^{\bullet}\otimes_{R}A_{n} \rightarrow C\otimes_{R}A_{n}$ is a
quasi-isomorphism. By \cite[Proposition 2.14]{CFH06},
$\alpha \otimes _{R}A: P^{\bullet}\otimes_{R}A \rightarrow C\otimes_{R}A$ is then a quasi-isomorphism, and hence
$C\otimes^{\mathbf{L}}_{R}A\simeq P^{\bullet}\otimes_{R}A\simeq C\otimes_{R}A\in \mathcal{D}_{b}(R)$.

It follows that $\mathrm{Hom}_{R}(C, C\otimes_{R}A_{n}) \rightarrow \mathrm{Hom}_{R}(P^{\bullet}, C\otimes_{R}A_{n})$ is a
quasi-isomorphism since $\mathrm{Ext}^{i}_{R}(C, C\otimes A_{n}) = 0$. Then, we have a quasi-isomorphism
$\mathrm{Hom}_{R}(\alpha, C\otimes _{R}A): \mathrm{Hom}_{R}(C, C\otimes_{R}A) \rightarrow \mathrm{Hom}_{R}(P^{\bullet}, C\otimes_{R}A)$
by \cite[Proposition 2.7]{CFH06}. Hence
$\mathbf{R}\mathrm{Hom}_{R}(C,  C\otimes_{R}^{\mathbf{L}}A)\simeq \mathrm{Hom}_{R}(P^{\bullet},  C\otimes_{R}A)\simeq \mathrm{Hom}_{R}(C,  C\otimes_{R}A)$.
Moreover, $A\rightarrow\mathrm{Hom}_{R}(C,  C\otimes_{R}A)$ is a canonical isomorphism. Then
$A\rightarrow \mathbf{R}\mathrm{Hom}_{R}(C,  C\otimes_{R}^{\mathbf{L}}A)$ is an isomorphism in $\mathcal{D}(R)$, and this implies $X\simeq A\in \mathcal{A}_{C}(R)$.
Similarly, the bounded complex  $\mathrm{H}(A)$ is also in $\mathcal{A}_{C}(R)$, and so
$\mathrm{H}(X)\simeq \mathrm{H}(A)\in \mathcal{A}_{C}(R)$. In the next, we need to prove that the complexes $\mathrm{Z}(X)$ and $\mathrm{B}(X)$
are in $\mathcal{A}_{C}(R)$.

Conversely, we suppose $\mathrm{Z}(X)$ is not in $\mathcal{A}_{C}(R)$. Note that $\mathcal{A}_{C}(R)$ is a triangulated subcategory of $\mathcal{D}_{b}(R)$,
which satisfies 2-out-of-3 property, that is, if any two items in an exact sequence are in $\mathcal{A}_{C}(R)$ then so is the third.
Then the exact sequence $0\rightarrow \mathrm{B}(X)\rightarrow \mathrm{Z}(X)\rightarrow \mathrm{H}(X)\rightarrow 0$ yields that
$\mathrm{B}(X)\not\in\mathcal{A}_{C}(R)$ (otherwise, contradict to $\mathrm{Z}(X)\not\in\mathcal{A}_{C}(R)$). Moreover, there is an exact sequence
$0\rightarrow \mathrm{Z}(X)\rightarrow X\rightarrow \mathrm{B}(X)[1]\rightarrow 0$, where $\mathrm{B}(X)[1]$ is the complex obtained by
shifting $\mathrm{B}(X)$ one-degree to the left, and it yields that $X\not\in\mathcal{A}_{C}(R)$. Hence, a contradiction occurs, which implies that
both $\mathrm{Z}(X)$ and $\mathrm{B}(X)$ are in $\mathcal{A}_{C}(R)$.

The rest of the assertions can be proved dually, and then is omitted.
\end{proof}

\begin{prop}\label{prop 4.3}
There is an equivalence of categories:
$$\xymatrix@C=100pt {
 \mathrm{CE}\text{-}\mathcal{A}_{C}(R) \ar@<0.5ex>[r]^{C\otimes_{\mathbf{R}} ^{\mathbf{L}}-}
& \mathrm{CE}\text{-}\mathcal{B}_{C}(R)\ar@<0.5ex>[l]^{\mathbf{R}\mathrm{Hom}_{R}(C, -)}
.}$$
\end{prop}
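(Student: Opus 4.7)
The plan is to reduce the claim to two preservation statements and then invoke the classical Foxby equivalence on $\mathcal{A}_C(R)$ and $\mathcal{B}_C(R)$ inside $\mathcal{D}_b(R)$. By Proposition \ref{prop 4.2} there are full embeddings $\mathrm{CE}\text{-}\mathcal{A}_C(R) \hookrightarrow \mathcal{A}_C(R)$ and $\mathrm{CE}\text{-}\mathcal{B}_C(R) \hookrightarrow \mathcal{B}_C(R)$, so for any $X \in \mathrm{CE}\text{-}\mathcal{A}_C(R)$ and $Y \in \mathrm{CE}\text{-}\mathcal{B}_C(R)$ the unit $\eta_X \colon X \to \mathbf{R}\mathrm{Hom}_R(C, C\otimes^{\mathbf{L}}_R X)$ and counit $\varepsilon_Y \colon C\otimes^{\mathbf{L}}_R \mathbf{R}\mathrm{Hom}_R(C,Y) \to Y$ are automatically isomorphisms in $\mathcal{D}(R)$. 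Since the C-E classes are \emph{full} subcategories, the only thing left to verify is that the two functors restrict correctly: $C\otimes^{\mathbf{L}}_R-$ sends $\mathrm{CE}\text{-}\mathcal{A}_C(R)$ into $\mathrm{CE}\text{-}\mathcal{B}_C(R)$, and $\mathbf{R}\mathrm{Hom}_R(C,-)$ sends $\mathrm{CE}\text{-}\mathcal{B}_C(R)$ into $\mathrm{CE}\text{-}\mathcal{A}_C(R)$.

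For the forward restriction, I begin with $X \simeq A$, where, by the truncation used in the proof of Proposition \ref{prop 4.2}, $A$ may be taken bounded with $A_n$, $\mathrm{Z}_n(A)$, $\mathrm{B}_n(A)$, $\mathrm{H}_n(A)$ all in $\mathcal{A}_C^0(R)$. Exactly as in Proposition \ref{prop 4.2}, the vanishing $\mathrm{Tor}_i^R(C, A_n) = 0$ for $i \geq 1$ allows me to represent $C\otimes^{\mathbf{L}}_R X$ by the ordinary tensor product $C\otimes_R A$. The key new ingredient is that $C\otimes_R -$ is exact on short exact sequences whose terms all lie in $\mathcal{A}_C^0(R)$. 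Applying this to the defining sequences
$$0 \to \mathrm{B}_n(A) \to \mathrm{Z}_n(A) \to \mathrm{H}_n(A) \to 0 \quad \text{and} \quad 0 \to \mathrm{Z}_n(A) \to A_n \to \mathrm{B}_{n-1}(A) \to 0$$
produces natural identifications $\mathrm{Z}_n(C\otimes_R A) \cong C\otimes_R \mathrm{Z}_n(A)$, $\mathrm{B}_n(C\otimes_R A) \cong C\otimes_R \mathrm{B}_n(A)$, and $\mathrm{H}_n(C\otimes_R A) \cong C\otimes_R \mathrm{H}_n(A)$. Combining with the standard module-level fact that $C\otimes_R M \in \mathcal{B}_C^0(R)$ whenever $M \in \mathcal{A}_C^0(R)$ then exhibits $C\otimes_R A$ as a representative witnessing $C\otimes^{\mathbf{L}}_R X \in \mathrm{CE}\text{-}\mathcal{B}_C(R)$.

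The dual restriction is proved symmetrically: given $Y \simeq D$ with $D$ a bounded complex of modules in $\mathcal{B}_C^0(R)$, an injective-resolution argument parallel to the projective one in Proposition \ref{prop 4.2} represents $\mathbf{R}\mathrm{Hom}_R(C,Y)$ by $\mathrm{Hom}_R(C,D)$, and the vanishing $\mathrm{Ext}^i_R(C, D_n) = 0$ makes $\mathrm{Hom}_R(C,-)$ exact on the analogous short exact sequences, so the formation of cycles, boundaries and homology commutes with $\mathrm{Hom}_R(C,-)$ degreewise, and $\mathrm{Hom}_R(C, N) \in \mathcal{A}_C^0(R)$ for $N \in \mathcal{B}_C^0(R)$. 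The main obstacle is the commutation of $\mathrm{Z}$, $\mathrm{B}$, $\mathrm{H}$ with the functor in question; its resolution rests entirely on the degreewise vanishing of $\mathrm{Tor}_i^R(C,-)$ (resp.\ $\mathrm{Ext}^i_R(C,-)$) built into the module-level Auslander (resp.\ Bass) class. Once this is in hand, everything else is either a direct appeal to Proposition \ref{prop 4.2}, a standard property of $\mathcal{A}_C^0(R)$ and $\mathcal{B}_C^0(R)$, or the classical Foxby equivalence.
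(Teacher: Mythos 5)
Your proposal is correct and follows essentially the same route as the paper: both reduce the statement to showing that $C\otimes^{\mathbf{L}}_R-$ and $\mathbf{R}\mathrm{Hom}_R(C,-)$ restrict correctly between the C-E classes, using the degreewise $\mathrm{Tor}$/$\mathrm{Ext}$ vanishing built into $\mathcal{A}_C^0(R)$ and $\mathcal{B}_C^0(R)$ to replace the derived functors by $C\otimes_R A$ and $\mathrm{Hom}_R(C,D)$ and to commute the formation of $\mathrm{Z}$, $\mathrm{B}$, $\mathrm{H}$ with these functors, and then both conclude via the inclusions from Proposition \ref{prop 4.2} together with the classical Foxby equivalence. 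No gaps.
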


\begin{proof}
Let $X\in\mathrm{CE}\text{-}\mathcal{A}_{C}(R)$. Then there exists an isomorphism $X\simeq A$ in
$\mathcal{D}(R)$, where $A$ is a bounded complex such that $A$, $\mathrm{Z}(A)$, $\mathrm{B}(A)$ and $\mathrm{H}(A)$ are
complexes consisting of modules in $\mathcal{A}_{C}^{0}(R)$.

It follows from the arguments above that
$C\otimes^{\mathbf{L}}_{R}X\simeq C\otimes_{R}A$.
Obviously, $C\otimes_{R}A$ is a complex of modules in $\mathcal{B}_{C}^{0}(R)$.
It remains to prove that Z$(C\otimes_{R}A)$, B$(C\otimes_{R}A)$ and H$(C\otimes_{R}A)$ are all complexes
of modules in $\mathcal{B}_{C}^{0}(R)$.

For any $n\in \mathbb{Z}$, consider the exact sequence $0\rightarrow\mathrm{Z}_{n}(A)\rightarrow A_{n}
\stackrel{d_{n}^{A}}\rightarrow  \mathrm{B}_{n-1}(A)\rightarrow 0$.
Since $\mathrm{B}_{n-1}(A)\in \mathcal{A}_{C}^{0}(R)$, then there is an exact sequence
$$0\longrightarrow C\otimes_{R}\mathrm{Z}_{n}(A)\longrightarrow C\otimes_{R}A_{n}
\stackrel{C\otimes_{R}d_{n}^{A}}\longrightarrow  C\otimes_{R}\mathrm{B}_{n-1}(A)\longrightarrow 0.$$
So $\mathrm{Z}_{n}(C\otimes_{R}P) = C\otimes_{R}\mathrm{Z}_{n}(P)\in \mathcal{B}_{C}^{0}(R)$ and
$B_{n-1}(C\otimes_{R}P) = C\otimes_{R}\mathrm{B}_{n-1}(P)\in \mathcal{B}_{C}^{0}(R)$.
Similarly, consider the exact sequence $0\rightarrow \mathrm{B}_{n}(A)\rightarrow \mathrm{Z}_{n}(A)\rightarrow \mathrm{H}_{n}(A)\rightarrow 0$,
and we have, from the exact sequence
$$0\longrightarrow C\otimes_{R}\mathrm{B}_{n}(A)\longrightarrow C\otimes_{R}\mathrm{Z}_{n}(A)
\longrightarrow  C\otimes_{R}\mathrm{H}_{n}(A)\longrightarrow 0,$$
that $\mathrm{H}_{n}(C\otimes_{R}A) = C\otimes_{R}\mathrm{H}_{n}(A)\in \mathcal{B}_{C}^{0}(R)$.

The proof that $\mathbf{R}\mathrm{Hom}_{R}(C, -)$ takes $\mathrm{CE}\text{-}\mathcal{B}_{C}(R)$ into
$\mathrm{CE}\text{-}\mathcal{A}_{C}(R)$ is similar.
Finally, it follows from Proposition \ref{prop 4.2} that there are inclusions of categories
$\mathrm{CE}\text{-}\mathcal{A}_{C}(R)\subseteq \mathcal{A}_{C}(R)$ and
$\mathrm{CE}\text{-}\mathcal{B}_{C}(R)\subseteq \mathcal{B}_{C}(R)$,
and hence the equivalence of categories is immediate by \cite[Theorem 4.6]{Chr01}.
This completes the proof.
\end{proof}

The subcategory
$\{X\in \mathcal{D}_b(R) | X\simeq P \text{ with } P \text{ a bounded C-E projective complex} \}$ of $\mathcal{D}_b(R)$
is denoted by CE-$\overline{\mathcal{P}}(R)$.
Similarly, CE-$\overline{\mathcal{P}}_{C}(R)$ denotes the subcategory of $\mathcal{D}_b(R)$ consisting of
complexes isomorphic to bounded C-E $C$-projective complexes.

By \cite[Lemmas 4.1, 5.1]{HW07}, the Auslander class $\mathcal{A}_{C}^{0}(R)$
contains every flat (projective) $R$-module and every $C$-injective $R$-module, and the Bass class $\mathcal{B}_{C}^{0}(R)$ contains every
 injective $R$-module and every $C$-projective $R$-module. We have:

\begin{prop}\label{prop 4.4}
\indent $(1)$ There are inclusions of categories:
\begin{center}
 $\mathrm{CE}\text{-}\overline{\mathcal{P}}(R) \subseteq \overline{\mathcal{P}}(R)\cap \mathrm{CE}\text{-}\mathcal{A}_{C}(R)$,
 $\mathrm{CE}\text{-}\overline{\mathcal{P}_{C}}(R) \subseteq \overline{\mathcal{P}_{C}}(R)\cap \mathrm{CE}\text{-}\mathcal{B}_{C}(R)$.\end{center}
 \indent $(2)$ There is an equivalence of categories:
\begin{center}$\xymatrix@C=100pt{
 \mathrm{CE}\text{-}\overline{\mathcal{P}}(R) \ar@<0.5ex>[r]^{C\otimes_{\mathbf{R}}^{\mathbf{L}}-}
& \mathrm{CE}\text{-}\overline{\mathcal{P}}_{C}(R)\ar@<0.5ex>[l]^{\mathbf{R}\mathrm{Hom}_{R}(C, -)}}$.\end{center}
\end{prop}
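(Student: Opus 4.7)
The plan is to dispose of part (1) by unpacking definitions and then to reduce part (2) to showing that the two functors preserve the distinguished subcategories, using Proposition 4.3 to inherit the quasi-inverse property.

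Part (1) is immediate from the recollection cited just before the proposition: every projective $R$-module lies in $\mathcal{A}_{C}^{0}(R)$ and every $C$-projective module lies in $\mathcal{B}_{C}^{0}(R)$. For $X\in\mathrm{CE}\text{-}\overline{\mathcal{P}}(R)$, fix a representative $X\simeq P$ with $P$ a bounded C-E projective complex; then each of $P$, $\mathrm{Z}(P)$, $\mathrm{B}(P)$, $\mathrm{H}(P)$ is a complex of projective modules, hence a complex of modules in $\mathcal{A}_{C}^{0}(R)$, giving $X\in\mathrm{CE}\text{-}\mathcal{A}_{C}(R)$; manifestly $X\in\overline{\mathcal{P}}(R)$. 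The second inclusion is identical, with ``$C$-projective'' in place of ``projective'' and $\mathcal{B}_{C}^{0}(R)$ in place of $\mathcal{A}_{C}^{0}(R)$.

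For part (2), combining (1) with Proposition 4.3 reduces the statement to showing that $C\otimes_{R}^{\mathbf{L}}-$ maps $\mathrm{CE}\text{-}\overline{\mathcal{P}}(R)$ into $\mathrm{CE}\text{-}\overline{\mathcal{P}_{C}}(R)$ and that $\mathbf{R}\mathrm{Hom}_{R}(C,-)$ maps the latter back into the former; that the restricted functors are quasi-inverse is then inherited from the ambient equivalence. For the tensor direction, represent $X\simeq P$ with $P$ bounded C-E projective and decompose $P=P'\oplus P''$ via Corollary 2.4, with $P'\in\widetilde{\mathcal{P}}$ and $P''$ a graded projective module. The cycle sequences $0\to\mathrm{Z}_{n}(P')\to P'_{n}\to\mathrm{Z}_{n-1}(P')\to 0$ of $P'$ split degreewise because each $\mathrm{Z}_{n-1}(P')$ is projective, so they remain exact after $C\otimes_{R}-$ and show $C\otimes_{R}P'\in\widetilde{\mathcal{P}_{C}}$; obviously $C\otimes_{R}P''$ is a graded $C$-projective module, so by Corollary 2.5 the complex $C\otimes_{R}P$ is a bounded C-E $C$-projective complex. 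Since $P$ consists of flat modules, $C\otimes_{R}^{\mathbf{L}}X\simeq C\otimes_{R}P$ in $\mathcal{D}(R)$ by the Tor-vanishing argument already used in Proposition 4.2.

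The Hom direction is dual. For $Y\simeq Q$ with $Q$ bounded C-E $C$-projective, decompose $Q=Q'\oplus Q''$ via Corollary 2.5. The semi-dualizing identities $\mathrm{Hom}_{R}(C,C\otimes_{R}F)\cong F$ and $\mathrm{Ext}^{i}_{R}(C,C\otimes_{R}F)=0$ for all $i\geq 1$, valid on projective $F$ and inherited by the $C$-projective modules making up $Q$, allow $\mathbf{R}\mathrm{Hom}_{R}(C,-)$ to be replaced by $\mathrm{Hom}_{R}(C,-)$ on $Q$; the split cycle sequences of $Q'$ pass through $\mathrm{Hom}_{R}(C,-)$, exhibiting $\mathrm{Hom}_{R}(C,Q')\in\widetilde{\mathcal{P}}$ and $\mathrm{Hom}_{R}(C,Q'')$ as a graded projective module, so Corollary 2.4 yields $\mathbf{R}\mathrm{Hom}_{R}(C,Y)\in\mathrm{CE}\text{-}\overline{\mathcal{P}}(R)$. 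The main technical points are thus (i) reducing derived functors to underived ones on the chosen representatives, and (ii) tracking the $\widetilde{\mathcal{W}}$-plus-graded decomposition of Proposition 2.3 through the two functors; both hinge on the splittings of the cycle sequences afforded by projectivity (respectively $C$-projectivity) of the cycles.
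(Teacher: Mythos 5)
Your proof is correct and follows essentially the same route as the paper: part (1) from the fact that projectives lie in $\mathcal{A}_{C}^{0}(R)$ and $C$-projectives in $\mathcal{B}_{C}^{0}(R)$, and part (2) by reducing the derived functors to $C\otimes_{R}-$ and $\mathrm{Hom}_{R}(C,-)$ on a bounded C-E projective representative and then invoking Proposition \ref{prop 4.3}. The only difference is cosmetic: the paper uses Corollary \ref{cor 2.4} to discard the exact (projective-complex) summand outright, replacing $P$ by the graded module $P''$ since $P\simeq P''$ in $\mathcal{D}(R)$, whereas you carry both summands of the decomposition through the functors via the split cycle sequences.
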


\begin{proof}
The inclusions of categories are obvious. We only need to prove (2).

Let $X\in \mathrm{CE}\text{-}\overline{\mathcal{P}}(R)$. Then there exists an isomorphism $X\simeq P$ in
$\mathcal{D}(R)$, where $P$ is a bounded C-E projective complex. By Corollary \ref{cor 2.4}, we may choose
$P$ to be a graded module of projectives. Note that $C\otimes_{\mathbf{R}}^{\mathbf{L}}X\simeq C\otimes_{R}P$
and $C\otimes_{R}P$ is a graded module of with items being $C$-projective modules. Then $C\otimes_{\mathbf{R}}^{\mathbf{L}}X$
is in $\mathrm{CE}\text{-}\overline{\mathcal{P}}_{C}(R)$.

For any $X\in\mathrm{CE}\text{-}\overline{{P}_{C}}(R)$, it is proved similarly that
$\mathbf{R}\mathrm{Hom}_{R}(C, X)\in \mathrm{CE}\text{-}\overline{P}(R)$.
Moreover, by Proposition \ref{prop 4.3} the equivalence of categories holds.
\end{proof}

We denote by $\mathcal{G}(\mathcal{P})$, $\mathcal{G}(\mathcal{I})$, $\mathcal{G}(\mathcal{P}_{C})$ and $\mathcal{G}(\mathcal{I}_{C})$ the class of
Gorenstein projective, Gorenstein injective, $C$-Gorenstein projective and $C$-Gorenstein injective modules respectively.

Recall from \cite{HJ06} and \cite{W10} that a complete $\mathcal{P}\mathcal{P}_{C}$-resolution is a complex
$X$ of $R$-modules satisfying: (1) The complex $X$ is exact and $\mathrm{Hom}_{R}(-, \mathcal{P}_{C})$-exact; (2) The $R$-module $X_i$
is projective if $i\geq 0$ and $X_i$ is $C$-projective if $i < 0$. An $R$-module $M$ is $\mathcal{G}_{C}$-projective if
there exists a complete $\mathcal{P}\mathcal{P}_{C}$-resolution  $X$ such that $M\cong \mathrm{Z}_{-1}(X)$. Dually,
a complete $\mathcal{I}_{C}\mathcal{I}$-coresolution and a $\mathcal{G}_{C}$-injective module are defined.
The classes of $\mathcal{G}_{C}$-projective and $\mathcal{G}_{C}$-injective modules are denoted by
$\mathcal{GP}_{C}$ and $\mathcal{GI}_{C}$ respectively.

In the following lemma, (1) is from \cite[Proposition 5.2]{SWSW08} or \cite[Proposition 3.6]{GD11},
and (2) is from \cite[Theorem 3.11]{GD11}.

\begin{lem}\label{lem 4.5}
\indent $(1)$ $\mathcal{G}(\mathcal{P}_{C}) = \mathcal{GP}_{C} \cap \mathcal{B}_{C}^{0}(R)$ and
$\mathcal{G}(\mathcal{I}_{C}) = \mathcal{GI}_{C} \cap \mathcal{A}_{C}^{0}(R)$.\\
\indent $(2)$ Let $M$ be an $R$-module. If $M\in \mathcal{G}(\mathcal{P})\cap \mathcal{A}^{0}_{C}(R)$,
then $C\otimes_{R}M \in \mathcal{G}(\mathcal{P}_{C})$; if $M\in \mathcal{G}(\mathcal{P}_{C})$,
then $\mathrm{Hom}_{R}(C, M)\in \mathcal{G}(\mathcal{P})\cap \mathcal{A}^{0}_{C}(R)$. Dually,
if $M\in \mathcal{G}(\mathcal{I}_{C})$,
then $C\otimes_{R}M \in \mathcal{G}(\mathcal{I})\cap \mathcal{B}^{0}_{C}(R)$;
if $M\in \mathcal{G}(\mathcal{I})\cap \mathcal{B}^{0}_{C}(R)$,
then $\mathrm{Hom}_{R}(C, M)\in \mathcal{G}(\mathcal{I}_{C})$.
\end{lem}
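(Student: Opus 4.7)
The plan is to prove part (2) first, since it provides the key functorial compatibility between ordinary Gorenstein projective modules (lying in the Auslander class) and $C$-Gorenstein projective modules (which automatically lie in the Bass class); part (1) then follows by combining (2) with a splicing construction. The injective statements are formally dual and would be argued by the same template.

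For the first implication of (2), suppose $M\in \mathcal{G}(\mathcal{P})\cap\mathcal{A}_{C}^{0}(R)$ with complete $\mathcal{P}$-resolution $P_{\bullet}$. First I would observe that every cycle of $P_{\bullet}$ again lies in $\mathcal{G}(\mathcal{P})\cap\mathcal{A}_{C}^{0}(R)$: the Gorenstein projectivity of cycles is standard, and membership in $\mathcal{A}_{C}^{0}(R)$ propagates through the short exact sequences of the resolution by the 2-of-3 property of the Auslander class. Hence $\mathrm{Tor}_{\geq 1}^{R}(C,-)$ vanishes on every cycle, so $C\otimes_{R}P_{\bullet}$ remains exact with terms in $\mathcal{P}_{C}$. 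To verify the Hom-exactness against any $Q=C\otimes P'\in \mathcal{P}_{C}$, I would invoke the adjunction identifications
$$\mathrm{Hom}_{R}(C\otimes P',\, C\otimes P_{i})\cong \mathrm{Hom}_{R}(P',P_{i}),\qquad \mathrm{Hom}_{R}(C\otimes P_{i},\, C\otimes P')\cong \mathrm{Hom}_{R}(P_{i},P'),$$
both valid because projective modules satisfy $P\cong \mathrm{Hom}_{R}(C,C\otimes P)$; this reduces the required exactness to the $\mathrm{Hom}_{R}(\mathcal{P},-)$- and $\mathrm{Hom}_{R}(-,\mathcal{P})$-exactness built into $P_{\bullet}$. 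The converse implication of (2) is handled dually: apply $\mathrm{Hom}_{R}(C,-)$ to a complete $\mathcal{P}_{C}$-resolution $W_{\bullet}$ of $M$, use that its cycles land in $\mathcal{B}_{C}^{0}(R)$ (again by 2-of-3) to secure exactness after $\mathrm{Hom}_{R}(C,-)$, and transfer the two Hom-exactness requirements through the same adjunctions.

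For (1), the inclusion $\mathcal{G}(\mathcal{P}_{C})\subseteq \mathcal{GP}_{C}\cap\mathcal{B}_{C}^{0}(R)$ is obtained by splicing an ordinary projective resolution of $M$ onto the right coresolution half of a complete $\mathcal{P}_{C}$-resolution of $M$; the crucial input $\mathrm{Ext}_{R}^{\geq 1}(M,\mathcal{P}_{C})=0$ follows from $M\in \mathcal{G}(\mathcal{P}_{C})$ and guarantees $\mathrm{Hom}_{R}(-,\mathcal{P}_{C})$-exactness of the splice, while $M\in\mathcal{B}_{C}^{0}(R)$ comes from the closure argument above. For the reverse inclusion $\mathcal{GP}_{C}\cap\mathcal{B}_{C}^{0}(R)\subseteq \mathcal{G}(\mathcal{P}_{C})$, my plan is to show $\mathrm{Hom}_{R}(C,M)\in\mathcal{G}(\mathcal{P})\cap\mathcal{A}_{C}^{0}(R)$ and then apply the forward half of (2) together with $M\cong C\otimes \mathrm{Hom}_{R}(C,M)$ (valid since $M\in\mathcal{B}_{C}^{0}(R)$). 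A complete $\mathcal{P}$-resolution of $\mathrm{Hom}_{R}(C,M)$ is built by splicing an ordinary projective resolution of $\mathrm{Hom}_{R}(C,M)$ onto the image under $\mathrm{Hom}_{R}(C,-)$ of the $\mathcal{P}_{C}$-coresolution half of the $\mathcal{P}\mathcal{P}_{C}$-resolution. The main obstacle, where I expect to spend the most care, is verifying $\mathrm{Hom}_{R}(-,\mathcal{P})$-exactness of this splice; the decisive trick is the adjunction rewrite $\mathrm{Hom}_{R}(Q_{i},P')\cong \mathrm{Hom}_{R}(C\otimes Q_{i},\, C\otimes P')$, which translates the needed exactness into $\mathrm{Ext}_{R}^{\geq 1}(M,\mathcal{P}_{C})=0$, itself a consequence of $M\in \mathcal{GP}_{C}$.
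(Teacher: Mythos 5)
First, note that the paper contains no proof of Lemma \ref{lem 4.5} to compare yours against: the statement is quoted from the literature, with (1) attributed to \cite[Proposition 5.2]{SWSW08} and \cite[Proposition 3.6]{GD11}, and (2) to \cite[Theorem 3.11]{GD11}. Judged on its own merits, your outline of the first implication of (2) is sound: the cycles of a complete projective resolution of $M\in\mathcal{G}(\mathcal{P})\cap\mathcal{A}_{C}^{0}(R)$ do lie in $\mathcal{A}_{C}^{0}(R)$ by two-of-three, because each short exact sequence of cycles has projective middle term and one end already known to be in the Auslander class (starting the induction from $M$ itself), and the naturality of the adjunction isomorphisms then transports both Hom-exactness conditions as you describe.

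The genuine gap is in the converse direction, where you claim the cycles of a complete $\mathcal{P}_{C}$-resolution $W_{\bullet}$ of $M\in\mathcal{G}(\mathcal{P}_{C})$ lie in $\mathcal{B}_{C}^{0}(R)$ ``again by 2-of-3''. Two-of-three requires two of the three terms to be known in advance; in each sequence $0\rightarrow Z_{i}\rightarrow W_{i}\rightarrow Z_{i-1}\rightarrow 0$ only the middle term $W_{i}\in\mathcal{P}_{C}\subseteq\mathcal{B}_{C}^{0}(R)$ is known, and no cycle is available to start the induction. The fact you are implicitly invoking, $\mathcal{G}(\mathcal{P}_{C})\subseteq\mathcal{B}_{C}^{0}(R)$, is precisely the substantive half of part (1), so at this point your arguments for (2)$\Leftarrow$ and for (1) become circular. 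A correct treatment must verify the three defining conditions of $\mathcal{B}_{C}^{0}(R)$ directly from the two Hom-exactness hypotheses: $\mathrm{Ext}^{\geq 1}_{R}(C,M)=0$ does come cheaply by testing $\mathrm{Hom}_{R}(\mathcal{P}_{C},-)$-exactness against $C=C\otimes_{R}R\in\mathcal{P}_{C}$ and dimension-shifting, but the vanishing of $\mathrm{Tor}^{R}_{\geq 1}(C,\mathrm{Hom}_{R}(C,M))$ and the bijectivity of the counit need a further induction on the cycles of $\mathrm{Hom}_{R}(C,W_{\bullet})$; this is exactly what \cite[Proposition 3.6]{GD11} and \cite[Proposition 5.2]{SWSW08} supply. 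A secondary, smaller gap: in your reverse inclusion for (1) you need $\mathrm{Ext}^{i}_{R}(\mathrm{Hom}_{R}(C,M),P^{\prime})\cong\mathrm{Ext}^{i}_{R}(M,C\otimes_{R}P^{\prime})$ for $i\geq 1$, and the degree-zero adjunction rewrite you cite does not give this by itself --- one needs the Ext-level Foxby-equivalence isomorphism over the Auslander and Bass classes, which is again a quotable but nontrivial lemma rather than a formality.
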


In the next, we consider the subcategory $\{X\in \mathcal{D}_b(R) | X\simeq G \}$ of $\mathcal{D}_b(R)$.
If $G$ is a bounded C-E Gorenstein projective complex, then it is denoted by CE-$\overline{\mathcal{G}(\mathcal{P})}(R)$;
if $G$ is a bounded C-E $C$-Gorenstein projective complex, then we use CE-$\overline{\mathcal{G}(\mathcal{P}_{C})}(R)$
to indicate it.

\begin{prop}\label{prop 4.6}
\indent $(1)$ There are inclusions of categories:
\begin{center}
 $\mathrm{CE}\text{-}\overline{\mathcal{P}}(R) \subseteq \mathrm{CE}\text{-}\overline{\mathcal{G}(\mathcal{P})}(R)\cap \mathrm{CE}\text{-}\mathcal{A}_{C}(R)$,
 $\mathrm{CE}\text{-}\overline{\mathcal{P}_{C}}(R) \subseteq \mathrm{CE}\text{-}\overline{\mathcal{G}(\mathcal{P}_{C}})(R)
 \subseteq \mathrm{CE}\text{-}\mathcal{B}_{C}(R)$.\end{center}
 \indent $(2)$ There is an equivalence of categories:
$$\xymatrix@C=100pt{
 \mathrm{CE}\text{-}\overline{\mathcal{G}(\mathcal{P})}(R)\cap \mathrm{CE}\text{-}\mathcal{A}_{C}(R) \ar@<0.5ex>[r]^{C\otimes_{\mathbf{R}}^{\mathbf{L}}-}
& \mathrm{CE}\text{-}\overline{\mathcal{G}(\mathcal{P}_C)}(R)\ar@<0.5ex>[l]^{\mathbf{R}\mathrm{Hom}_{R}(C, -)}}.$$
\end{prop}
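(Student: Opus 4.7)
\emph{Plan.} For part (1), the three inclusions reduce to module-level containments. Every projective $R$-module is trivially Gorenstein projective and lies in $\mathcal{A}_C^0(R)$ (by the Holm--White results recalled before Proposition~\ref{prop 4.4}), so any bounded C-E projective complex $P$ has the modules of $P$, $\mathrm{Z}(P)$, $\mathrm{B}(P)$, $\mathrm{H}(P)$ simultaneously Gorenstein projective and in $\mathcal{A}_C^0(R)$; this gives the first inclusion. A $C$-projective module is $C$-Gorenstein projective via the trivial complete $\mathcal{P}_C$-resolution, yielding the middle inclusion. The last inclusion $\mathrm{CE}\text{-}\overline{\mathcal{G}(\mathcal{P}_C)}(R) \subseteq \mathrm{CE}\text{-}\mathcal{B}_C(R)$ follows from Lemma~\ref{lem 4.5}(1), which supplies $\mathcal{G}(\mathcal{P}_C) \subseteq \mathcal{B}_C^0(R)$.

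For (2), I model on Proposition~\ref{prop 4.4}(2). Given $X$ in the left-hand category, I want a bounded representative $G$ that is simultaneously C-E Gorenstein projective and has all modules of $G$, $\mathrm{Z}(G)$, $\mathrm{B}(G)$, $\mathrm{H}(G)$ in $\mathcal{A}_C^0(R)$. Granting such a $G$, Lemma~\ref{lem 4.5}(2) gives $C \otimes_R G_n \in \mathcal{G}(\mathcal{P}_C)$ for each $n$. Since $\mathrm{Tor}_1^R(C,-)$ vanishes on $\mathcal{A}_C^0(R)$, the short exact sequences
$$0\to\mathrm{Z}_n(G)\to G_n\to\mathrm{B}_{n-1}(G)\to 0 \quad\text{and}\quad 0\to\mathrm{B}_n(G)\to\mathrm{Z}_n(G)\to\mathrm{H}_n(G)\to 0$$
are preserved under $C\otimes_R -$, so $\mathrm{Z}_n(C\otimes_R G) = C\otimes_R\mathrm{Z}_n(G)$ and similarly for $\mathrm{B}_n$ and $\mathrm{H}_n$; each lies in $\mathcal{G}(\mathcal{P}_C)$ by Lemma~\ref{lem 4.5}(2). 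The Tor-vanishing argument from the proof of Proposition~\ref{prop 4.2} identifies $C\otimes^{\mathbf{L}}_R X$ with the bounded C-E $C$-Gorenstein projective complex $C\otimes_R G$.

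The reverse direction is cleaner: for $Y\simeq H$ with $H$ bounded C-E $C$-Gorenstein projective, the inclusion $\mathcal{G}(\mathcal{P}_C) \subseteq \mathcal{B}_C^0(R)$ from Lemma~\ref{lem 4.5}(1) forces $\mathrm{Ext}^i_R(C,-)$ to vanish on the modules of $H$, giving $\mathbf{R}\mathrm{Hom}_R(C, Y) \simeq \mathrm{Hom}_R(C, H)$; Lemma~\ref{lem 4.5}(2) applied termwise, together with left-exactness of $\mathrm{Hom}_R(C,-)$ on the analogous cycle/boundary sequences, shows $\mathrm{Hom}_R(C, H)$ is a bounded C-E Gorenstein projective complex with modules in $\mathcal{A}_C^0(R)$, hence lies in the intersection. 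The categorical equivalence then descends from Proposition~\ref{prop 4.3}.

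The main obstacle I anticipate is the opening step of the forward direction: extracting a single representative $G$ that is simultaneously bounded C-E Gorenstein projective and has constituent modules in $\mathcal{A}_C^0(R)$, since the two defining properties of the intersection can a priori be witnessed by two different representatives. Proposition~\ref{prop 4.2} and the splitting $\mathrm{H}(G) \simeq \bigoplus_n S^n(\mathrm{H}_n(G))$ of the zero-differential complex $\mathrm{H}(G)$ force each $\mathrm{H}_n(G) \in \mathcal{A}_C^0(R)$, but propagating to $\mathrm{Z}_n(G)$, $\mathrm{B}_n(G)$ and $G_n$ via the short exact sequences above is not automatic --- $\mathcal{A}_C^0(R)$ is stable under extensions and under kernels of surjections between its members (a long-exact-sequence check using the semi-dualizing identities), but additional input (e.g.\ a reduction step replacing $G$ by a better representative) is likely needed.
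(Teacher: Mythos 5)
Your argument is essentially the paper's. Part (1) reduces, as you say, to the module-level containments $\mathcal{P}\subseteq\mathcal{G}(\mathcal{P})\cap\mathcal{A}_C^0(R)$ and $\mathcal{P}_C\subseteq\mathcal{G}(\mathcal{P}_C)\subseteq\mathcal{B}_C^0(R)$ (the last via Lemma \ref{lem 4.5}(1)). Part (2) is carried out exactly as in the paper: represent $X$ by a bounded complex $G$, identify $C\otimes^{\mathbf{L}}_RX$ with $C\otimes_RG$ by the argument of Proposition \ref{prop 4.2}, apply Lemma \ref{lem 4.5}(2) termwise to $G$, $\mathrm{Z}(G)$, $\mathrm{B}(G)$, $\mathrm{H}(G)$, and invoke Proposition \ref{prop 4.3} for the equivalence. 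The paper states this skeleton in three sentences; your version supplies the Tor- and Ext-vanishing bookkeeping that makes $C\otimes_R-$ and $\mathrm{Hom}_R(C,-)$ commute with cycles, boundaries and homology, which the paper leaves implicit.

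The obstacle you flag at the end is genuine, and you should know that the paper does not resolve it either: its proof simply writes ``$X\simeq G$, where $G$ is a bounded C-E Gorenstein projective complex and is in $\mathrm{CE}$-$\mathcal{A}_C(R)$'' and then runs the Proposition \ref{prop 4.2} computation on $G$, which requires $\mathrm{Tor}^R_{i}(C,G_n)=0$ for $i\geq 1$ for the terms of that particular $G$. Membership of $X$ in the intersection only provides two a priori different representatives: a bounded C-E Gorenstein projective $G$, and a bounded $A$ whose terms (and those of $\mathrm{Z}(A)$, $\mathrm{B}(A)$, $\mathrm{H}(A)$) lie in $\mathcal{A}_C^0(R)$. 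A contractible summand $D^n(Q)$ of $G$ with $Q$ Gorenstein projective but $Q\notin\mathcal{A}_C^0(R)$ is invisible in $\mathcal{D}(R)$ yet destroys the termwise Tor-vanishing. Your observation that $\mathrm{H}_n(G)\cong\mathrm{H}_n(A)\in\mathcal{G}(\mathcal{P})\cap\mathcal{A}_C^0(R)$ is forced is correct, but it does not propagate to $\mathrm{B}_n(G)$, $\mathrm{Z}_n(G)$ and $G_n$ without a further reduction --- for instance, rebuilding a representative from $\mathrm{H}(G)$ and suitable boundary data in the spirit of Lemma \ref{lem 5.2}, or assuming $C$ dualizing, in which case $\mathcal{G}(\mathcal{P})\subseteq\mathcal{A}_C^0(R)$ holds automatically and the issue disappears. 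In short: you have faithfully reproduced the paper's proof, including its one unproved step; you have not introduced a new gap, but you also have not closed the one you correctly identified.
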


\begin{proof}
Let $X$ be a complex in $\mathcal{D}_b(R)$.
If $X\in\mathrm{CE}\text{-}\overline{\mathcal{G}(\mathcal{P})}(R)\cap \mathrm{CE}\text{-}\mathcal{A}_{C}(R)$,
then $X\simeq G$, where $G$ is a bounded C-E Gorenstein projective complex and is in CE-$\mathcal{A}_C(R)$.
By the argument in Proposition \ref{prop 4.2}, $C\otimes_{\mathbf{R}}^{\mathbf{L}}X\simeq C\otimes_{R}G$.

For $X\in\mathrm{CE}\text{-}\overline{\mathcal{G}(\mathcal{P}_C)}(R)$, it is proved similarly
that $X\simeq G$ where $G$ is a bounded C-E $C$-Gorenstein projective complex, and moreover,
$\mathbf{R}\mathrm{Hom}_{R}(C, X)\simeq \mathrm{Hom}_{R}(C, G)$.
Then, the assertions follow by Lemma \ref{lem 4.5} and Proposition \ref{prop 4.3}.
\end{proof}

\begin{rem}\label{rem 4.7}
The dual  results hold, and then we have the following diagram:
$$\xymatrix@C=100pt{
\mathcal{A}_{C}(R)  \ar@<0.4ex>[r]^{C\otimes^{\mathbf{L}}_{R}-}
&\mathcal{B}_{C}(R) \ar@<0.4ex>[l]^{\mathbf{R}\mathrm{Hom}_{R}(C, -)}\\
\mathrm{CE}\text{-}\mathcal{A}_{C}(R) \ar@{^{(}->}[u] \ar@<0.4ex>[r]^{C\otimes^{\mathbf{L}}_{R}-}
&\mathrm{CE}\text{-}\mathcal{B}_{C}(R) \ar@{^{(}->}[u]\ar@<0.4ex>[l]^{\mathbf{R}\mathrm{Hom}_{R}(C, -)}\\
\mathrm{CE}\text{-}\overline{\mathcal{G}(\mathcal{I}_C)}(R) \ar@{^{(}->}[u]\ar@<0.4ex>[r]^{C\otimes^{\mathbf{L}}_{R}-}
&\mathrm{CE}\text{-}\overline{\mathcal{G}(\mathcal{I})}(R)\cap \mathrm{CE}\text{-}\mathcal{B}_{C}(R)  \ar@{^{(}->}[u]\ar@<0.4ex>[l]^{\mathbf{R}\mathrm{Hom}_{R}(C, -)}\\
\mathrm{CE}\text{-}\overline{\mathcal{I}_{C}}(R) \ar@{^{(}->}[u]\ar@<0.4ex>[r]^{C\otimes^{\mathbf{L}}_{R}-}
&\mathrm{CE}\text{-}\overline{\mathcal{I}}(R) \ar@{^{(}->}[u] \ar@<0.4ex>[l]^{\mathbf{R}\mathrm{Hom}_{R}(C, -)}
}
$$
\end{rem}

\section{\bf Cartan-Eilenberg homological dimensions}
In this section, we are devoted to study homological dimensions of complexes with respect to C-E complexes.
The symbol $\mathcal{D}_{\sqsupset}(R)$ (resp. $\mathcal{D}_{\sqsubset}(R)$)  stands for the full subcategory of
$\mathcal{D}(R)$ consisting of homologically right-bounded (resp.  homologically left-bounded) complexes.

\begin{df}\label{def 5.1}
Let $X$ be a complex in $\mathcal{D}_{\sqsupset}(R)$. Consider the invariant
$$\mathrm{inf}\{\mathrm{sup}\{i\in \mathbb{Z}\mid P_{i} \neq 0 \} \mid X \simeq P \text{ with } P\in \mathcal{C}_{\mathcal{X}}(R)\};$$
if $\mathcal{C}_{\mathcal{X}}(R)$ is the category of $\mathrm{C}$-$\mathrm{E}$ projective complexes,
then it is said to be $\mathrm{C}$-$\mathrm{E}$ projective dimension of $X$,
and is denoted by $\mathrm{CE}\text{-}\mathrm{pd}_{R}X$;
if $\mathcal{C}_{\mathcal{X}}(R)$ is the category of $\mathrm{C}$-$\mathrm{E}$ Gorenstein projective complexes,
then it is said to be $\mathrm{C}$-$\mathrm{E}$ Gorenstein projective dimension of $X$, and is denoted by
$\mathrm{CE}\text{-}\mathrm{Gpd}_{R}X$.
\end{df}

Next, we will show that under reasonable conditions, the set taken in the above definition is not empty.

Recall that a complex $P$ is DG-projective provided that $P_{n}$ is a projective module for any $n\in \mathbb{Z}$ such that every chain map
from $P$ to an exact complex $E$ is homotopic to zero; which is also termed semi-projective complex.
A complex is projective if and only if it is both exact and DG-projective. By \cite[section 10]{E11}, every
projective complex is C-E projective and every C-E projective complex is DG-projective.

\begin{lem}\label{lem 5.2}
Let $X$ be a DG-projective complex with $\mathrm{H}(X)$ consisting of projective modules. Then there exists a split exact sequence
$0\rightarrow K\rightarrow P\rightarrow X\rightarrow 0$, where $P$ is $\mathrm{C}$-$\mathrm{E}$ projective and $K$ is projective.
Consequently, a complex $P$ is $\mathrm{C}$-$\mathrm{E}$ projective if and only if $P$ is DG-projective
with $\mathrm{H}(P)$ a complex of projective modules.
\end{lem}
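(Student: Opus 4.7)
The plan is to prove directly that $X$ is itself a C-E projective complex. The split exact sequence required by the lemma can then be taken to be the trivial one $0\to 0\to X\to X\to 0$ (with $K=0$, $P=X$); the ``$\Rightarrow$'' direction of the final characterization is exactly the remark stated just before the lemma, while the ``$\Leftarrow$'' direction is what we shall prove. The strategy is to exhibit a decomposition of $X$ that matches the one in Corollary \ref{cor 2.4}.

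First I will build a quasi-isomorphism $\phi:H(X)\to X$ that actually splits on the nose as a chain map. Since each $H_{n}(X)$ is projective, the short exact sequence $0\to B_{n}(X)\to Z_{n}(X)\to H_{n}(X)\to 0$ splits; choosing sections $s_{n}:H_{n}(X)\to Z_{n}(X)\subseteq X_{n}$ and viewing $H(X)=\bigoplus_{n}S^{n}(H_{n}(X))$ as a complex with zero differential, the $s_{n}$ assemble into a chain map $\phi:H(X)\to X$ that induces the identity on homology, so it is a quasi-isomorphism. Both source and target are DG-projective: $X$ by hypothesis, and each $S^{n}(H_{n}(X))$ because a direct check of the $\mathrm{Hom}$ complex shows that $\mathrm{Hom}_{\mathcal{C}(R)}(S^{n}(M),E)$ is acyclic for every exact complex $E$ when $M$ is projective. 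Hence $\phi$ is a homotopy equivalence with some homotopy inverse $\psi:X\to H(X)$. Because $d^{H(X)}=0$, every chain homotopy between self-maps of $H(X)$ is zero, so $\psi\phi\sim 1_{H(X)}$ forces $\psi\phi=1_{H(X)}$ exactly. Consequently $\phi\psi$ is a genuine idempotent endomorphism of complexes and yields
\[
X=\phi(H(X))\oplus \ker(\psi)
\]
in $\mathcal{C}(R)$.

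It remains to identify the two summands with the pieces appearing in Corollary \ref{cor 2.4}. The first summand $\phi(H(X))\cong H(X)$ is a graded module whose terms are the projective modules $H_{n}(X)$. The second summand $\ker(\psi)$ is a direct summand of the DG-projective complex $X$ (hence DG-projective) and is exact: the short exact sequence $0\to\ker(\psi)\to X\to H(X)\to 0$, combined with $\psi$ being a quasi-isomorphism, forces $H(\ker(\psi))=0$. An exact DG-projective complex is contractible, and for any contractible complex with projective terms the contracting homotopy $s$ realizes each cycle module as a direct summand of the adjacent projective term (concretely $s|_{Z_{n}}$ is a section of $d_{n+1}:K_{n+1}\to Z_{n}$); so $Z_{n}(\ker(\psi))$ is projective and $\ker(\psi)$ is a projective complex in the paper's sense. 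Corollary \ref{cor 2.4} applied to the decomposition above then gives that $X$ is C-E projective, which establishes both the split exact sequence and the ``only if'' of the characterization.

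The main obstacle is upgrading the homotopy-level splitting of $\phi$ to an on-the-nose splitting of chain complexes; this hinges on the zero differential of $H(X)$, which collapses all potential homotopies between self-maps of $H(X)$ and lets the idempotent $\phi\psi$ produce a genuine direct-sum decomposition. Once this is in hand, the identification of $\ker(\psi)$ as an exact DG-projective (hence contractible, hence projective) complex and the invocation of Corollary \ref{cor 2.4} are routine.
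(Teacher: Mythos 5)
Your proof is correct, but it takes a genuinely different route from the paper's. The paper never tries to show that $X$ itself is C-E projective directly: it builds an auxiliary C-E projective complex $P$ with $P_n=Q_n\oplus\mathrm{H}_n(X)\oplus Q_{n-1}$ by two applications of the Horseshoe Lemma (first to $0\to \mathrm{B}_n(X)\to \mathrm{Z}_n(X)\to \mathrm{H}_n(X)\to 0$, using that $\mathrm{H}_n(X)$ is its own projective resolution, then to $0\to \mathrm{Z}_n(X)\to X_n\to \mathrm{B}_{n-1}(X)\to 0$), observes that the kernel $K$ is exact and DG-projective, hence contractible and projective, and concludes that the degreewise-split sequence $0\to K\to P\to X\to 0$ splits, so $X$ is a direct summand of $P$. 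You instead work inside $X$: you split $\mathrm{H}(X)$ off as a subcomplex via sections $s_n$ of $\mathrm{Z}_n(X)\to\mathrm{H}_n(X)$, invoke the fact that a quasi-isomorphism between DG-projective complexes is a homotopy equivalence, and exploit the rigidity of the zero-differential complex $\mathrm{H}(X)$ to turn the homotopy splitting into a genuine idempotent, giving $X=\phi(\mathrm{H}(X))\oplus\ker\psi$ with the first summand a graded module of projectives and the second an exact DG-projective (hence projective) complex; Corollary \ref{cor 2.4} then applies. The trade-off: the paper's argument is more elementary (Horseshoe Lemma plus the standard splitting criterion for degreewise-split sequences with contractible kernel) and produces an explicit surjection from a C-E projective complex, in the spirit of Enochs's precover constructions; your argument imports one standard external fact (quasi-isomorphisms between DG-projectives are homotopy equivalences) but is shorter and yields the strictly sharper conclusion that a DG-projective complex with projective homology is already C-E projective, which makes the required split exact sequence trivial ($K=0$) and also streamlines the deduction of Proposition \ref{prop 5.3}. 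All the individual steps you use --- that $S^{n}(M)$ is DG-projective for projective $M$, that homotopies between self-maps of a zero-differential complex vanish, that a contracting homotopy splits $d_{n+1}$ over $\mathrm{Z}_n$ --- check out, so I see no gap.
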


\begin{proof}
For any $n\in \mathbb{Z}$, consider the exact sequence
$0\rightarrow L_{n}\rightarrow Q_{n}\rightarrow \mathrm{B}_{n}(X)\rightarrow 0$ with $Q_{n}$ projective. By using the Horseshoe Lemma, we have
the following commutative diagrams:
$$\xymatrix@C=25pt {
&0 \ar[d] &0 \ar[d] & 0 \ar[d] \\
0\ar[r] &L_{n} \ar[r]\ar[d] &L_{n} \ar[r]\ar[d] &0 \ar[r]\ar[d] &0\\
0\ar[r] &Q_{n} \ar[r]\ar[d] &Q_{n}\oplus\mathrm{H}_{n}(X) \ar[r]\ar[d] &\mathrm{H}_{n}(X) \ar[r]\ar[d] & 0\\
0\ar[r] &\mathrm{B}_{n}(X) \ar[r]\ar[d] &\mathrm{Z}_{n}(X) \ar[r]\ar[d] &\mathrm{H}_{n}(X) \ar[r]\ar[d]  & 0\\
& 0 & 0 & 0
}$$
and
$$\xymatrix@C=12pt {
&0 \ar[d] &0 \ar[d] & 0 \ar[d] \\
0\ar[r] &L_{n} \ar[r]\ar[d] &K_{n} \ar[r]\ar[d] &L_{n-1} \ar[r]\ar[d] &0\\
0\ar[r] &Q_{n}\oplus\mathrm{H}_{n}(X) \ar[r]\ar[d] &Q_{n}\oplus\mathrm{H}_{n}(X)\oplus Q_{n-1} \ar[r]\ar[d] &Q_{n-1} \ar[r]\ar[d] & 0\\
0\ar[r] &\mathrm{Z}_{n}(X) \ar[r]\ar[d] &X_{n} \ar[r]\ar[d] &\mathrm{B}_{n-1}(X) \ar[r]\ar[d]  & 0\\
& 0 & 0 & 0
}$$
Let $P_{n} = Q_{n}\oplus\mathrm{H}_{n}(X)\oplus Q_{n-1}$. Then $P = (P_{n}, d_{n})$ is a complex with the differential
$d_{n}: P_{n}\rightarrow P_{n-1}$ which is given by $d_{n}(x, y, z) = (z, 0, 0)$. It is direct to check that $P$ is a C-E
projective complex. Moreover, we have an exact complex $K$ by pasting the exact sequences
$0\rightarrow L_{n}\rightarrow K_{n}\rightarrow L_{n-1}\rightarrow 0$ together; $K$ is also DG-projective since both
$X$ and $P$ are so, and then $K$ is projective. Note that the exact sequence
$0\rightarrow K\rightarrow P\rightarrow X\rightarrow 0$ is split degreewise and  $K$ is contractible, hence it is split.
This implies that $X$ is a direct summand of $P$, and then is DG-projective.
\end{proof}

\begin{prop}\label{prop 5.3}
Let $X$ be a complex. Then $X\simeq P$ for a $\mathrm{C}$-$\mathrm{E}$ projective complex $P$ if and only if
$\mathrm{H}(X)$ is a complex of projective modules. In this case, one can chose $P$ to be DG-projective.
\end{prop}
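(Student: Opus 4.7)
The plan is to prove the two directions separately, with the bulk of the work already encapsulated in Lemma~\ref{lem 5.2}, and to extract the final clause directly from that lemma.

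First, the implication $X\simeq P \Rightarrow \mathrm{H}(X)$ is a complex of projective modules is immediate: a quasi-isomorphism induces an isomorphism $\mathrm{H}(X)\cong \mathrm{H}(P)$ of complexes, and by definition of a C-E projective complex the homology complex $\mathrm{H}(P)$ consists of projective modules.

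For the converse, I would first invoke the standard existence of DG-projective resolutions: pick a quasi-isomorphism $\pi: P'\to X$ with $P'$ DG-projective (this exists for any complex over $R$ by Spaltenstein-type arguments). Since $\pi$ is a quasi-isomorphism, $\mathrm{H}(P')\cong \mathrm{H}(X)$ is a complex of projective modules. Now apply Lemma~\ref{lem 5.2} to $P'$: it yields a split short exact sequence
$$0\longrightarrow K\longrightarrow P\longrightarrow P'\longrightarrow 0,$$
in which $P$ is C-E projective and $K$ is a projective (hence exact) complex. Exactness of $K$ forces the surjection $P\to P'$ to be a quasi-isomorphism, so in the derived category $P\simeq P'\simeq X$. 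This produces the desired C-E projective representative.

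The last assertion that $P$ may be chosen DG-projective then comes for free: Lemma~\ref{lem 5.2} already shows that every C-E projective complex is DG-projective (with homology a complex of projective modules), so no further adjustment is needed. The main technical obstacle---the Horseshoe-type construction that builds a C-E projective complex on top of a DG-projective one whose homology is projective---was handled in Lemma~\ref{lem 5.2}; consequently the proof of Proposition~\ref{prop 5.3} reduces to combining that lemma with the existence of DG-projective resolutions, and no new delicate argument is required.
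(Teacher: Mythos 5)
Your proof is correct and follows essentially the same route as the paper: the paper's own proof also takes a DG-projective resolution $P'\to X$ and then applies Lemma~\ref{lem 5.2}, merely leaving implicit the details (homology isomorphism for the forward direction, exactness of the projective kernel $K$ forcing $P\to P'$ to be a quasi-isomorphism) that you spell out. No issues.
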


 \begin{proof}
It is well known that for any complex $X$, there exist a (epic) quasi-isomorphism $P\rightarrow X$
with $P$ DG-projective. Then the assertion follows immediately by Lemma \ref{lem 5.2}.
 \end{proof}

\begin{rem}\label{rem 5.4}
\indent$(1)$  For any complex $X\in\mathcal{D}_{\sqsupset}(R)$, it follows from the definition that
$\mathrm{sup}(X)\leq \mathrm{CE}\text{-}\mathrm{Gpd}_{R}X \leq \mathrm{CE}\text{-}\mathrm{pd}_{R}X$.
If $X\simeq 0$, i.e. $X$ is exact, then $\mathrm{CE}\text{-}\mathrm{pd}_{R}X = -\infty = \mathrm{CE}\text{-}\mathrm{Gpd}_{R}X$.\\
\indent(2) Recall that when $\mathcal{C}_{\mathcal{X}}(R)$ in the above definition is the category of right-bounded complexes
of projective modules, then it is projective dimension of $X$, denoted by $\mathrm{pd}_{R}X$;
when $\mathcal{C}_{\mathcal{X}}(R)$ is the category of right-bounded complexes
of Gorenstein projective modules, then it is Gorenstein projective dimension in the
sense of Christensen et. al \cite{Chr00, CFH06}, and is denoted by $\mathrm{Gpd}_{R}X$.
Moreover, we have $\mathrm{pd}_{R}X\leq \mathrm{CE}\text{-}\mathrm{pd}_{R}X$, $\mathrm{Gpd}_{R}X \leq \mathrm{CE}\text{-}\mathrm{Gpd}_{R}X$.
 \end{rem}

\begin{thm}\label{thm 5.5}
Let $X\in\mathcal{D}_{\sqsupset}(R)$. Assume that $\mathrm{H}(X)$ is a complex of projective modules.
Then the following are equivalent:\\
\indent$(1)$ $\mathrm{CE}\text{-}\mathrm{pd}_{R}X$ is finite.\\
\indent$(2)$ $\mathrm{pd}_{R}X$, $\mathrm{pd}_{R}\mathrm{Z}(X)$, $\mathrm{pd}_{R}\mathrm{B}(X)$ and $\mathrm{pd}_{R}\mathrm{H}(X)$ are all finite.
\end{thm}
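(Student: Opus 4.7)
The plan is to prove the two implications separately, relying on Corollary 2.4 (the structural decomposition of C-E projective complexes) and Lemma 5.2 (the embedding of DG-projective complexes with projective homology into C-E projective complexes).

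For $(2)\Rightarrow (1)$, I would take a DG-projective resolution $q\colon Q\to X$; the assumption $\mathrm{pd}_R X<\infty$ lets us choose $Q$ bounded. The standing hypothesis that $\mathrm{H}(X)$ consists of projective modules transfers to $\mathrm{H}(Q)=\mathrm{H}(X)$, so Lemma \ref{lem 5.2} applied to $Q$ yields a degree-wise split exact sequence $0\to K\to P\to Q\to 0$ with $K$ a contractible projective complex and $P$ a bounded C-E projective complex. Since $K\simeq 0$ in $\mathcal{D}(R)$, we get $X\simeq Q\simeq P$, and therefore $\mathrm{CE}\text{-}\mathrm{pd}_R X\le \sup(P)<\infty$. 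Note that only $\mathrm{pd}_R X<\infty$ and the standing hypothesis are needed in this direction; the extra conditions on $\mathrm{Z}(X), \mathrm{B}(X), \mathrm{H}(X)$ in $(2)$ are redundant here.

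For $(1)\Rightarrow (2)$, Remark \ref{rem 5.4} immediately yields $\mathrm{pd}_R X\le \mathrm{CE}\text{-}\mathrm{pd}_R X<\infty$. Choosing a bounded C-E projective $P\simeq X$ and decomposing $P=P'\oplus P''$ via Corollary \ref{cor 2.4} (with $P'$ a bounded exact projective complex of projective cycles, and $P''$ a bounded graded module of projectives) gives $\mathrm{H}(X)\cong \mathrm{H}(P)=P''$, a bounded complex of projectives with zero differential, so $\mathrm{pd}_R\mathrm{H}(X)<\infty$. For $\mathrm{Z}(X)$ and $\mathrm{B}(X)$, I would take a bounded DG-projective surjection $q\colon Q\twoheadrightarrow X$ (available since $\mathrm{pd}_R X<\infty$) and apply Lemma \ref{lem 5.2} to $Q$: this realizes $Q$ as a complex-level direct summand of a bounded C-E projective complex, and because each defining property of C-E projectivity (projectivity of the degree-$n$ piece of the complex itself, of its cycles, boundaries, and homology) is preserved under direct summands, $Q$ itself is C-E projective. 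In particular $\mathrm{Z}(Q)$ and $\mathrm{B}(Q)$ are bounded complexes of projective modules, hence of finite projective dimension. Letting $K=\ker q$, an acyclic bounded projective complex, acyclicity of $K$ yields a short exact sequence of zero-differential complexes $0\to \mathrm{Z}(K)\to \mathrm{Z}(Q)\to \mathrm{Z}(X)\to 0$; since the complexes of finite projective dimension form a thick triangulated subcategory of $\mathcal{D}(R)$ and $\mathrm{Z}(K), \mathrm{Z}(Q)$ both lie in it, so does $\mathrm{Z}(X)$. Finally, $\mathrm{pd}_R\mathrm{B}(X)<\infty$ follows from $0\to\mathrm{B}(X)\to\mathrm{Z}(X)\to\mathrm{H}(X)\to 0$ and two-out-of-three in the same thick subcategory.

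The main obstacle is the cycle-and-boundary transfer in $(1)\Rightarrow (2)$: a generic quasi-isomorphism $X\simeq P$ does not induce quasi-isomorphisms on cycles or boundaries, so finite projective dimension of $\mathrm{Z}(P), \mathrm{B}(P)$ does not directly pass back to $\mathrm{Z}(X), \mathrm{B}(X)$. The workaround above---constructing a surjective comparison $Q\twoheadrightarrow X$ with acyclic projective kernel $K$, promoting $Q$ itself to a C-E projective complex via Lemma \ref{lem 5.2}, and then exploiting the short exact sequence $0\to\mathrm{Z}(K)\to\mathrm{Z}(Q)\to\mathrm{Z}(X)\to 0$ inside the thick subcategory of finite-pd complexes---is the technical heart of the argument.
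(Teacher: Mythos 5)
Your direction $(2)\Rightarrow(1)$ is correct and close in spirit to the paper's: the paper truncates a C-E projective resolution to $P_{(s,i)}$ using \cite[Theorem 2.4.P]{AF91}, whereas you invoke Lemma \ref{lem 5.2} directly; both arguments use only $\mathrm{pd}_RX<\infty$ together with the standing hypothesis on $\mathrm{H}(X)$, as you correctly observe. Likewise your transfer of $\mathrm{pd}_RX<\infty$ and $\mathrm{pd}_R\mathrm{H}(X)<\infty$ in $(1)\Rightarrow(2)$ matches the paper. The gap is exactly at the step you call the technical heart. The complex $K=\ker(q\colon Q\twoheadrightarrow X)$ is acyclic, but it is \emph{not} a projective complex: its terms $\ker(Q_n\to X_n)$ are first syzygies of the modules $X_n$, and nothing forces $\mathrm{Z}(K)$ to have finite projective dimension. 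Indeed, your own exact sequence $0\to \mathrm{Z}_n(K)\to \mathrm{Z}_n(Q)\to \mathrm{Z}_n(X)\to 0$, with $\mathrm{Z}_n(Q)$ projective, shows that $\mathrm{Z}_n(K)$ has finite projective dimension if and only if $\mathrm{Z}_n(X)$ does; so the appeal to thickness is circular---you are assuming precisely what you need to prove.

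Moreover, this step cannot be repaired, because the implication $(1)\Rightarrow(2)$ fails for $\mathrm{Z}(X)$ and $\mathrm{B}(X)$ as stated: take $M$ with $\mathrm{pd}_RM=\infty$ and $X=D^{1}(M)\oplus S^{0}(R)$. Then $\mathrm{H}(X)=S^{0}(R)$ is a complex of projectives and $X\simeq S^{0}(R)$, so $\mathrm{CE}\text{-}\mathrm{pd}_RX=0$; yet $\mathrm{Z}(X)=S^{0}(M\oplus R)$ and $\mathrm{B}(X)=S^{0}(M)$ have infinite projective dimension. The underlying obstruction is the one you yourself identify: $\mathrm{CE}\text{-}\mathrm{pd}_RX$ and $\mathrm{H}(X)$ are invariants of the isomorphism class of $X$ in $\mathcal{D}(R)$, while $\mathrm{Z}(X)$ and $\mathrm{B}(X)$ are not. (For comparison, the paper's own proof of this direction stalls at the same point: the two sequences $0\to\mathrm{B}(X)\to\mathrm{Z}(X)\to\mathrm{H}(X)\to0$ and $0\to\mathrm{Z}(X)\to X\to\mathrm{B}(X)[1]\to0$ each yield, via the two-out-of-three property, only the equivalence $\mathrm{pd}_R\mathrm{Z}(X)<\infty\Leftrightarrow\mathrm{pd}_R\mathrm{B}(X)<\infty$, not the contradiction claimed there.) So your $(2)\Rightarrow(1)$ stands, but the cycle-and-boundary half of $(1)\Rightarrow(2)$ is irreparably broken unless the statement is reformulated, e.g.\ by requiring $(2)$ only for some representative of $X$, or by assuming $X$ itself is the C-E projective representative.
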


\begin{proof}
(1)$\Longrightarrow$(2)  Since $\mathrm{CE}\text{-}\mathrm{pd}_{R}X < \infty$, it is trivial that $\mathrm{pd}_{R}X<\infty$. Noting that
$X\simeq P$ for a bounded C-E projective complex $P$, $\mathrm{H}(X)$ is isomorphic to the bounded complex of projective modules $\mathrm{H}(P)$,
and then $\mathrm{pd}_{R}\mathrm{H}(X)<\infty$.

By \cite[1.4.3]{Vel06}, in an exact sequence of complexes, if two of them have finite projective dimension, so does
the third. Consider exact sequences $0\rightarrow \mathrm{B}(X)\rightarrow \mathrm{Z}(X)\rightarrow \mathrm{H}(X)\rightarrow 0$
and
$0\rightarrow \mathrm{Z}(X)\rightarrow X\rightarrow \mathrm{B}(X)[1]\rightarrow 0$, where $\mathrm{B}(X)[1]$ is the complex obtained by
shifting $\mathrm{B}(X)$ one-degree to the left. If either $\mathrm{pd}_{R}\mathrm{Z}(X)=\infty$ or $\mathrm{pd}_{R}\mathrm{B}(X)=\infty$,
a contradiction will occur. It yields that both $\mathrm{pd}_{R}\mathrm{Z}(X)$ and $\mathrm{pd}_{R}\mathrm{B}(X)$ are finite.

(2)$\Longrightarrow$(1) We set $s= \mathrm{pd}_{R}X$ and $i=\mathrm{inf}(X)$.
Then $X\simeq P\simeq P_{(s, i)}$, where $P$ is C-E projective and
$$P_{(s, i)} = 0\longrightarrow P_{s}/\mathrm{B}_{s}(P)\longrightarrow  P_{s-1}\longrightarrow  \cdots \longrightarrow P_{i+1}\longrightarrow \mathrm{Z}_{i}(P)\longrightarrow 0$$
is the truncated complex of $P$. By \cite[Theorem 2.4.P.]{AF91}, $P_{s}/\mathrm{B}_{s}(P)$ is a projective module, and then $P_{(s, i)}$
is also a C-E projective complex. Hence $\mathrm{CE}\text{-}\mathrm{pd}_{R}X \leq s$.
\end{proof}

\begin{cor}\label{cor 5.6}
Let $X\in\mathcal{D}_{\sqsupset}(R)$ be a complex of finite $\mathrm{C}$-$\mathrm{E}$ projective dimension. Then
$\mathrm{CE}\text{-}\mathrm{pd}_{R}X = \mathrm{pd}_{R}X$.
\end{cor}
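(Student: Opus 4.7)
The inequality $\mathrm{pd}_{R}X \leq \mathrm{CE}\text{-}\mathrm{pd}_{R}X$ is already recorded in Remark \ref{rem 5.4}(2), so the plan reduces to establishing the reverse inequality $\mathrm{CE}\text{-}\mathrm{pd}_{R}X \leq \mathrm{pd}_{R}X$ under the hypothesis $\mathrm{CE}\text{-}\mathrm{pd}_{R}X < \infty$. First I would observe that this finiteness produces a quasi-isomorphism $X \simeq P$ with $P$ a bounded $\mathrm{C}$-$\mathrm{E}$ projective complex; in particular $\mathrm{H}(X) \cong \mathrm{H}(P)$ is a complex of projective modules. This places $X$ in the hypothesis of Theorem \ref{thm 5.5} and, via the implication (1)$\Longrightarrow$(2) there, forces $\mathrm{pd}_{R}X$ to be finite.

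Set $s = \mathrm{pd}_{R}X$ and $i = \mathrm{inf}(X)$. By Proposition \ref{prop 5.3}, I may pick a (possibly unbounded) $\mathrm{C}$-$\mathrm{E}$ projective complex $P'$ with $X \simeq P'$, and then form the soft truncation
$$P'_{(s, i)} = 0 \longrightarrow P'_{s}/\mathrm{B}_{s}(P') \longrightarrow P'_{s-1} \longrightarrow \cdots \longrightarrow P'_{i+1} \longrightarrow \mathrm{Z}_{i}(P') \longrightarrow 0,$$
which is still quasi-isomorphic to $X$. Because $\mathrm{pd}_{R}X = s$, the top term $P'_{s}/\mathrm{B}_{s}(P')$ is projective by \cite[Theorem 2.4.P.]{AF91}. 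A direct check then shows that the cycles, boundaries and homology of $P'_{(s,i)}$ are projective in every degree: at the interior degrees they coincide with those of the $\mathrm{C}$-$\mathrm{E}$ projective complex $P'$, while at the boundary degrees $s$ and $i$ they reduce to $\mathrm{H}_{s}(P')$, $\mathrm{Z}_{i}(P')$, $\mathrm{H}_{i}(P')$, or $0$, each visibly projective. Consequently $P'_{(s,i)}$ is a bounded $\mathrm{C}$-$\mathrm{E}$ projective complex, giving $\mathrm{CE}\text{-}\mathrm{pd}_{R}X \leq s = \mathrm{pd}_{R}X$.

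The single nontrivial input is the projectivity of $P'_{s}/\mathrm{B}_{s}(P')$, which is precisely the classical fact already invoked in the proof of Theorem \ref{thm 5.5}(2)$\Longrightarrow$(1). I therefore anticipate no genuine obstacle: the corollary will follow by combining Proposition \ref{prop 5.3} with the same truncation argument used in Theorem \ref{thm 5.5}.
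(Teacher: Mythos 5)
Your proposal is correct and follows exactly the route the paper intends: the inequality $\mathrm{pd}_{R}X \leq \mathrm{CE}\text{-}\mathrm{pd}_{R}X$ from Remark \ref{rem 5.4}(2), plus the observation that the truncation argument in the proof of Theorem \ref{thm 5.5} (2)$\Longrightarrow$(1) actually bounds $\mathrm{CE}\text{-}\mathrm{pd}_{R}X$ by $s=\mathrm{pd}_{R}X$ once $\mathrm{H}(X)$ is known to be a complex of projectives. Your explicit verification that the cycles, boundaries and homology of the soft truncation remain projective at the edge degrees is a welcome detail that the paper leaves implicit.
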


Modules with excellent duality properties have turned out to be a powerful tool \cite{Har66}.
Recall that a semi-dualizing module $C$ is dualizing provided that
$C$ has finite injective dimension. For C-E Gorenstein projective dimension,
we have the following results from Theorem C in introduction.

\begin{thm}\label{thm 5.7}
Let $X\in\mathcal{D}_{\sqsupset}(R)$.
Assume that $X\simeq G$ for a $\mathrm{C}$-$\mathrm{E}$ Gorenstein projective complex $G$.
Then the following are equivalent:\\
\indent$(1)$  $\mathrm{CE}\text{-}\mathrm{Gpd}_{R}X$ is finite.\\
\indent$(2)$   $\mathrm{Gpd}_{R}X$, $\mathrm{Gpd}_{R}\mathrm{Z}(X)$, $\mathrm{Gpd}_{R}\mathrm{B}(X)$ and $\mathrm{Gpd}_{R}\mathrm{H}(X)$ are all finite.

Moreover, if $R$ is a noetherian ring with a dualizing module $C$, then the above are equivalent to:\\
\indent$(3)$  $X\in \mathrm{CE}\text{-}\mathcal{A}_{C}(R)$.\\
\indent$(4)$  $X$, $\mathrm{Z}(X)$, $\mathrm{B}(X)$ and $\mathrm{H}(X)$ are in $\mathcal{A}_{C}(R)$.
\end{thm}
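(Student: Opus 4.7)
I would establish the four-way equivalence in two stages. First, (1)$\Longleftrightarrow$(2) can be handled unconditionally, mirroring the pattern of Theorem~\ref{thm 5.5} with Gorenstein projective dimension in place of projective dimension. Then (2), (3), (4) are shown equivalent under the dualizing-module hypothesis by combining the Auslander-class machinery of Section~4 with the classical characterization of finite Gorenstein projective dimension via the Auslander class.

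For (1)$\Longrightarrow$(2), note that $\mathrm{CE}\text{-}\mathrm{Gpd}_{R}X<\infty$ supplies a bounded C-E Gorenstein projective complex $G'$ with $X\simeq G'$. Then $\mathrm{H}(X)\cong\mathrm{H}(G')$ is a bounded complex of Gorenstein projective modules, so $\mathrm{Gpd}_{R}\mathrm{H}(X)<\infty$, and trivially $\mathrm{Gpd}_{R}X\leq\mathrm{CE}\text{-}\mathrm{Gpd}_{R}X<\infty$. Applying the 2-out-of-3 property of finite Gorenstein projective dimension to the short exact sequences of complexes $0\to\mathrm{B}(X)\to\mathrm{Z}(X)\to\mathrm{H}(X)\to 0$ and $0\to\mathrm{Z}(X)\to X\to\mathrm{B}(X)[1]\to 0$ then yields finiteness of $\mathrm{Gpd}_{R}\mathrm{Z}(X)$ and $\mathrm{Gpd}_{R}\mathrm{B}(X)$, just as in the proof of Theorem~\ref{thm 5.5}(1)$\Longrightarrow$(2).

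For (2)$\Longrightarrow$(1), I would set $s=\max\{\mathrm{Gpd}_{R}X,\mathrm{Gpd}_{R}\mathrm{Z}(X),\mathrm{Gpd}_{R}\mathrm{B}(X),\mathrm{Gpd}_{R}\mathrm{H}(X)\}$ and $i=\inf(X)$. The hypothesis $X\simeq G$ for a C-E Gorenstein projective complex $G$ guarantees that $G$, $\mathrm{Z}(G)$, $\mathrm{B}(G)$ and $\mathrm{H}(G)$ are complexes of Gorenstein projective modules. Since $s\geq\sup(X)$, the truncation
$$G_{(s,i)}=0\longrightarrow G_s/\mathrm{B}_s(G)\longrightarrow G_{s-1}\longrightarrow\cdots\longrightarrow G_{i+1}\longrightarrow\mathrm{Z}_i(G)\longrightarrow 0$$
is quasi-isomorphic to $G$, hence to $X$. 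The only non-trivial point is that $G_s/\mathrm{B}_s(G)$ is Gorenstein projective; this follows from the exact sequence $0\to\mathrm{H}_s(G)\to G_s/\mathrm{B}_s(G)\to\mathrm{B}_{s-1}(G)\to 0$ and closure of $\mathcal{G}(\mathcal{P})$ under extensions \cite[Corollary 4.5]{SWSW08}. The cycles, boundaries and homology of $G_{(s,i)}$ coincide with those of $G$ in middle degrees and equal $\mathrm{H}_s(G)$, $\mathrm{B}_{s-1}(G)$ or $\mathrm{Z}_i(G)$ at the truncation edges, all of which are Gorenstein projective. Hence $G_{(s,i)}$ is a bounded C-E Gorenstein projective complex and $\mathrm{CE}\text{-}\mathrm{Gpd}_{R}X\leq s<\infty$.

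For the second stage, (3)$\Longrightarrow$(4) is Proposition~\ref{prop 4.2}. For (4)$\Longrightarrow$(2) I would invoke the classical characterization that over a noetherian ring with a dualizing module, a complex in $\mathcal{D}_{\sqsupset}(R)$ has finite Gorenstein projective dimension if and only if it lies in $\mathcal{A}_{C}(R)$ (cf.\ \cite{CFH06}); applying this to each of $X$, $\mathrm{Z}(X)$, $\mathrm{B}(X)$, $\mathrm{H}(X)$ gives (2). Finally, (1)$\Longrightarrow$(3) follows because the bounded C-E Gorenstein projective representative $G'$ produced in (1) has $G'$, $\mathrm{Z}(G')$, $\mathrm{B}(G')$, $\mathrm{H}(G')$ all consisting of Gorenstein projective modules, and under the dualizing-module hypothesis every Gorenstein projective module lies in $\mathcal{A}_{C}^{0}(R)$ (having Gpd $0$ it sits in the Auslander class); hence $G'$ itself witnesses $X\in\mathrm{CE}\text{-}\mathcal{A}_{C}(R)$. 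The hard part will be the truncation step in (2)$\Longrightarrow$(1): one must leverage all four simultaneous finite-Gpd hypotheses together with the C-E structure of $G$ to verify that the edge terms $G_s/\mathrm{B}_s(G)$, $\mathrm{H}_s(G)$, $\mathrm{B}_{s-1}(G)$ and $\mathrm{Z}_i(G)$ all land in $\mathcal{G}(\mathcal{P})$; extension closure of $\mathcal{G}(\mathcal{P})$ and the choice of $s$ as the maximum of all four Gpd invariants are the decisive ingredients.
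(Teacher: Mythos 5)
Your proposal is correct and follows essentially the same route as the paper: (1)$\Leftrightarrow$(2) via the 2-out-of-3 property and truncation, (1)$\Rightarrow$(3) via the fact that Gorenstein projective modules lie in $\mathcal{A}_{C}^{0}(R)$ over a ring with a dualizing module, (3)$\Rightarrow$(4) via Proposition~\ref{prop 4.2}, and (2)$\Leftrightarrow$(4) via the functorial characterization in \cite{CFH06}. The only (harmless) deviation is in the truncation step: the paper takes $s=\mathrm{Gpd}_{R}X$ and cites \cite[Theorem 3.1]{CFH06} to see that $G_{s}/\mathrm{B}_{s}(G)$ is Gorenstein projective, whereas you deduce it directly from the extension $0\to\mathrm{H}_{s}(G)\to G_{s}/\mathrm{B}_{s}(G)\to\mathrm{B}_{s-1}(G)\to 0$ and closure of $\mathcal{G}(\mathcal{P})$ under extensions, which in fact shows your cautious choice of $s$ as a maximum is unnecessary and recovers the sharper bound $\mathrm{CE}\text{-}\mathrm{Gpd}_{R}X\leq\mathrm{Gpd}_{R}X$ used in Corollary~\ref{cor 5.8}.
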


\begin{proof}
By \cite[Theorem 3.9 (1)]{Vel06}, in an exact sequence of complexes, if two of them have finite Gorenstein projective dimension, then so
does the third. Then (1)$\Longrightarrow$(2) follows by an argument analogous to the proof in Theorem \ref{thm 5.5}.
For (2)$\Longrightarrow$(1), we set $s= \mathrm{Gpd}_{R}X$, $i=\mathrm{inf}(X)$ and assume that $X\simeq G$ for a C-E Gorenstein projective complex $G$.
By \cite[Theorem 3.1]{CFH06}, $G_{s}/\mathrm{B}_{s}(G)$ is a Gorenstein projective module. Then $X\simeq G_{(s, i)}$ with $G_{(s, i)}$
a C-E Gorenstein projective complex. Hence $\mathrm{CE}\text{-}\mathrm{Gpd}_{R}X \leq s$.

Now let $R$ be a noetherian ring with a dualizing module $C$. It follows from \cite[Proposition 3.9]{EJOR05} that every Gorenstein
projective module is in $\mathcal{A}^{0}_{C}(R)$.
Hence the condition $\mathrm{CE}\text{-}\mathrm{Gpd}_{R}X <\infty$ implies $X\simeq G$ for a bounded C-E Gorenstein projective complex $G$,
and hence $X\simeq G\in \mathrm{CE}\text{-}\mathcal{A}_{C}(R)$. Then (1)$\Longrightarrow$(3) follows. It is immediate from Proposition \ref{prop 4.2}
that (3)$\Longrightarrow$(4), and (2)$\Longleftrightarrow$(4) follows by \cite[Theorem 4.1]{CFH06}.
\end{proof}

\begin{cor}\label{cor 5.8}
Let $X\in\mathcal{D}_{\sqsupset}(R)$ be a complex of finite $\mathrm{C}$-$\mathrm{E}$ Gorenstein projective dimension. Then
$\mathrm{CE}\text{-}\mathrm{Gpd}_{R}X = \mathrm{Gpd}_{R}X$.
\end{cor}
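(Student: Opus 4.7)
The plan is to establish the equality via two inequalities. The inequality $\mathrm{Gpd}_{R}X \leq \mathrm{CE}\text{-}\mathrm{Gpd}_{R}X$ is already recorded in Remark \ref{rem 5.4}(2), so the work reduces to showing the reverse inequality $\mathrm{CE}\text{-}\mathrm{Gpd}_{R}X \leq \mathrm{Gpd}_{R}X$.

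Since $\mathrm{CE}\text{-}\mathrm{Gpd}_{R}X$ is finite by assumption, Definition \ref{def 5.1} furnishes a bounded C-E Gorenstein projective complex $G$ with $X \simeq G$. This places $X$ inside the hypotheses of Theorem \ref{thm 5.7}, and the implication $(1)\Longrightarrow(2)$ of that theorem gives that $\mathrm{Gpd}_{R}X$ is finite. I would then set $s = \mathrm{Gpd}_{R}X$ and $i = \mathrm{inf}(X)$, so that $s$ is the natural candidate upper bound to match.

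The key step is to reuse the truncation argument that already appears inside the proof of Theorem \ref{thm 5.7}$(2)\Longrightarrow(1)$. Namely, I would form the truncation
$$G_{(s,i)} = 0 \longrightarrow G_{s}/\mathrm{B}_{s}(G) \longrightarrow G_{s-1} \longrightarrow \cdots \longrightarrow G_{i+1} \longrightarrow \mathrm{Z}_{i}(G) \longrightarrow 0,$$
which is quasi-isomorphic to $G$, and hence to $X$. By \cite[Theorem 3.1]{CFH06}, the top-degree module $G_{s}/\mathrm{B}_{s}(G)$ is Gorenstein projective. A routine check, using the short exact sequence $0 \to \mathrm{H}_{s}(G) \to G_{s}/\mathrm{B}_{s}(G) \to \mathrm{B}_{s-1}(G) \to 0$ at the top together with the observation that cycles, boundaries and homologies of $G_{(s,i)}$ at the lower degrees agree with those of $G$, confirms that $G_{(s,i)}$ is again a bounded C-E Gorenstein projective complex in the sense of Definition \ref{df 3.1}. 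This yields $\mathrm{CE}\text{-}\mathrm{Gpd}_{R}X \leq s = \mathrm{Gpd}_{R}X$.

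I do not expect a substantial obstacle here: the argument is essentially a repackaging of what the proof of Theorem \ref{thm 5.7} already establishes, and the only point requiring care will be verifying, via the short exact sequence above and the closure of $\mathcal{G}(\mathcal{P})$ under extensions \cite[Corollary 4.5]{SWSW08}, that each of $G_{(s,i)}$, $\mathrm{Z}(G_{(s,i)})$, $\mathrm{B}(G_{(s,i)})$ and $\mathrm{H}(G_{(s,i)})$ remains a complex of Gorenstein projective modules so that the truncated complex witnesses the dimension bound.
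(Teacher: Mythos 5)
Your proposal is correct and matches the paper's (implicit) argument: Corollary \ref{cor 5.8} is obtained exactly by combining the inequality $\mathrm{Gpd}_{R}X \leq \mathrm{CE}\text{-}\mathrm{Gpd}_{R}X$ from Remark \ref{rem 5.4}(2) with the truncation $G_{(s,i)}$ at $s=\mathrm{Gpd}_{R}X$ carried out in the proof of Theorem \ref{thm 5.7}$(2)\Longrightarrow(1)$, which shows $\mathrm{CE}\text{-}\mathrm{Gpd}_{R}X\leq s$. Your extra verification that $\mathrm{Z}$, $\mathrm{B}$ and $\mathrm{H}$ of the truncation stay Gorenstein projective (via the extension $0\to \mathrm{H}_{s}(G)\to G_{s}/\mathrm{B}_{s}(G)\to \mathrm{B}_{s-1}(G)\to 0$) is a harmless elaboration of the same step.
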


Dually, for a homologically left-bounded complex $X$, one can define C-E injective dimension, denoted by
$\mathrm{CE}\text{-}\mathrm{id}_{R}X$, to be
$\mathrm{inf}\{\mathrm{sup}\{i\in \mathbb{Z}\mid I_{-i} \neq 0 \} \mid X \simeq I \text{ with } I \text{ C-E injective}\}$.
Similarly, C-E Gorenstein injective dimension of $X$ is defined with
$\mathrm{inf}\{\mathrm{sup}\{i\in \mathbb{Z}\mid E_{-i} \neq 0 \} \mid X \simeq E \text{ with } E \text{ C-E Gorenstein injective}\}$,
and is denoted by  $\mathrm{CE}\text{-}\mathrm{Gid}_{R}X$.

\begin{thm}\label{thm 5.9}
Let $X\in\mathcal{D}_{\sqsubset}(R)$. Assume that  $X\simeq I$ for a $\mathrm{C}$-$\mathrm{E}$ injective complex $I$
(equivalently, $\mathrm{H}(X)$ is a complex of injective modules).
Then the following are equivalent:\\
\indent$(1)$ $\mathrm{CE}\text{-}\mathrm{id}_{R}X$ is finite.\\
\indent$(2)$ $\mathrm{id}_{R}X$, $\mathrm{id}_{R}\mathrm{Z}(X)$, $\mathrm{id}_{R}\mathrm{B}(X)$
and $\mathrm{id}_{R}\mathrm{H}(X)$ are all finite.
\end{thm}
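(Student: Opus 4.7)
The plan is to dualize the proof of Theorem 5.5 in a step-by-step manner. Because the hypothesis ``$X\simeq I$ for a $\mathrm{C}\text{-}\mathrm{E}$ injective complex $I$, equivalently $\mathrm{H}(X)$ is a complex of injective modules'' is the dual of Proposition 5.3, the structural setup is parallel throughout.

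For $(1)\Rightarrow(2)$, I would first observe that every bounded $\mathrm{C}\text{-}\mathrm{E}$ injective complex is, in particular, a bounded complex of injective modules, so the dual of Remark 5.4(2) yields $\mathrm{id}_R X\leq \mathrm{CE}\text{-}\mathrm{id}_R X<\infty$. Next, fixing $X\simeq I$ with $I$ a bounded $\mathrm{C}\text{-}\mathrm{E}$ injective complex, the isomorphism $\mathrm{H}(X)\simeq \mathrm{H}(I)$ exhibits $\mathrm{H}(X)$ as a bounded complex of injective modules, giving $\mathrm{id}_R\mathrm{H}(X)<\infty$. For the remaining two dimensions I would invoke the dual of \cite[1.4.3]{Vel06}, i.e.\ the 2-out-of-3 property for injective dimension in an exact sequence of complexes, applied first to $0\to \mathrm{B}(X)\to \mathrm{Z}(X)\to \mathrm{H}(X)\to 0$ and then to $0\to \mathrm{Z}(X)\to X\to \mathrm{B}(X)[1]\to 0$. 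Arguing by contradiction, if either $\mathrm{id}_R\mathrm{Z}(X)$ or $\mathrm{id}_R\mathrm{B}(X)$ were infinite, one of these triangles would force $\mathrm{id}_R X$ or $\mathrm{id}_R\mathrm{H}(X)$ to be infinite, a contradiction.

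For $(2)\Rightarrow(1)$, set $s=\mathrm{id}_R X$ and $i=\mathrm{sup}(X)$, and fix $X\simeq I$ with $I$ a $\mathrm{C}\text{-}\mathrm{E}$ injective complex (given by hypothesis). I would then form the dual truncation
\[
I_{(i,i-s)}\;=\; 0\longrightarrow I_i/\mathrm{B}_i(I)\longrightarrow I_{i-1}\longrightarrow \cdots \longrightarrow I_{i-s+1}\longrightarrow \mathrm{Z}_{i-s}(I)\longrightarrow 0,
\]
which is the injective analogue of $P_{(s,i)}$. The quasi-isomorphism $X\simeq I_{(i,i-s)}$ holds because $\mathrm{sup}(X)=i$ and $\mathrm{id}_R X\leq s$ together force $\mathrm{H}_j(X)=0$ outside $[i-s,i]$, which matches $\mathrm{H}(I_{(i,i-s)})$ by a direct calculation. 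Middle terms $I_{i-1},\ldots,I_{i-s+1}$ are injective because $I$ is $\mathrm{C}\text{-}\mathrm{E}$ injective; the bottom term $\mathrm{Z}_{i-s}(I)$ is injective since $\mathrm{Z}(I)$ consists of injective modules; and the top term $I_i/\mathrm{B}_i(I)$ is injective because $\mathrm{B}_i(I)$ is an injective submodule of the injective $I_i$, so the short exact sequence $0\to \mathrm{B}_i(I)\to I_i\to I_i/\mathrm{B}_i(I)\to 0$ splits.

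The main obstacle is the final bookkeeping: verifying that the truncation $I_{(i,i-s)}$ is not merely a bounded complex of injective modules but actually $\mathrm{C}\text{-}\mathrm{E}$ injective, i.e.\ that $\mathrm{Z}(I_{(i,i-s)})$, $\mathrm{B}(I_{(i,i-s)})$ and $\mathrm{H}(I_{(i,i-s)})$ all consist of injective modules. In the interior indices these subcomplexes coincide with $\mathrm{Z}(I)$, $\mathrm{B}(I)$ and $\mathrm{H}(I)$ and thus have injective entries by the hypothesis on $I$; at the two modified endpoints a short computation identifies the resulting terms as $\mathrm{H}_i(I)$, $\mathrm{B}_{i-s}(I)$ or further direct summands of the injective data of $I$. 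Once this verification is complete, $I_{(i,i-s)}$ is a bounded $\mathrm{C}\text{-}\mathrm{E}$ injective complex with $X\simeq I_{(i,i-s)}$, yielding $\mathrm{CE}\text{-}\mathrm{id}_R X<\infty$ (in fact bounded by a quantity depending on $s$ and $i$), which also recovers a dual of Corollary 5.6.
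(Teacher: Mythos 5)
Your overall strategy---dualizing the proof of Theorem \ref{thm 5.5}---is exactly what the paper intends (Theorem \ref{thm 5.9} is stated there without proof, as the dual statement), and your direction $(1)\Rightarrow(2)$ is a faithful and correct dualization. However, there is a concrete error in $(2)\Rightarrow(1)$: the truncation degrees. With the paper's conventions, $\mathrm{id}_{R}X$ is an \emph{absolute} degree measure, namely $\mathrm{id}_{R}X=\inf\{\sup\{j\mid I_{-j}\neq 0\}\}$, so $s=\mathrm{id}_{R}X$ only guarantees $\mathrm{H}_{j}(X)=0$ for $j<-s$, i.e.\ $\inf(X)\geq -s$; it does \emph{not} give $\mathrm{H}_{j}(X)=0$ for $j<i-s$ where $i=\sup(X)$. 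Your claim that ``$\sup(X)=i$ and $\mathrm{id}_{R}X\leq s$ together force $\mathrm{H}_{j}(X)=0$ outside $[i-s,i]$'' fails whenever $i>0$: for instance, $X=S^{1}(E)\oplus S^{-2}(E')$ with $E,E'$ injective has $i=1$, $s=2$, and $\mathrm{H}_{-2}(X)=E'\neq 0$, which lies outside $[i-s,i]=[-1,1]$. For such $X$ your complex $I_{(i,i-s)}$ discards nonzero homology and is not quasi-isomorphic to $X$, so the argument breaks. (You have implicitly used the module-theoretic convention in which the resolution starts at $\sup(X)$ and its length is measured relative to that degree.)

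The fix is purely notational and restores the exact dual of the paper's $P_{(s,i)}$ (which runs from degree $s=\mathrm{pd}_{R}X$ down to degree $i=\inf(X)$): the injective truncation should run from degree $\sup(X)$ at the top down to degree $-s$ (or, what also works, down to $\inf(X)\geq -s$) at the bottom, with the quotient $I_{\sup(X)}/\mathrm{B}_{\sup(X)}(I)$ at the top and the kernel $\mathrm{Z}_{-s}(I)$ at the bottom. With that change, all of your remaining verifications go through exactly as you describe: the top quotient is injective because $\mathrm{B}(I)$ consists of injectives (so the relevant short exact sequence splits), the bottom kernel is injective because $\mathrm{Z}(I)$ consists of injectives (one does not even need the injective analogue of \cite[Theorem 2.4.P]{AF91} here), and the cycles, boundaries and homology of the truncation agree with those of $I$ in interior degrees and are identified with $\mathrm{H}(I)$ and $\mathrm{B}(I)$ data at the endpoints. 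This yields $\mathrm{CE}\text{-}\mathrm{id}_{R}X\leq s$ and completes the dualization.
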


\begin{cor}\label{cor 5.10}
Let $X\in\mathcal{D}_{\sqsubset}(R)$ be a complex of finite $\mathrm{C}$-$\mathrm{E}$ injective dimension. Then
$\mathrm{CE}\text{-}\mathrm{id}_{R}X = \mathrm{id}_{R}X$.
\end{cor}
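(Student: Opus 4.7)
The plan is to establish $\mathrm{CE}\text{-}\mathrm{id}_{R}X = \mathrm{id}_{R}X$ by proving both inequalities, in close parallel with Corollary 5.6 and the argument used in Theorem 5.5.

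For the inequality $\mathrm{id}_{R}X \leq \mathrm{CE}\text{-}\mathrm{id}_{R}X$, I would invoke the injective dual of the observation in Remark 5.4(2): every $\mathrm{C}$-$\mathrm{E}$ injective complex $I$ is termwise injective. Indeed, for each $n$ the short exact sequence $0 \to \mathrm{Z}_{n}(I) \to I_{n} \to \mathrm{B}_{n-1}(I) \to 0$ splits, since $\mathrm{B}_{n-1}(I)$ is injective by definition of $\mathrm{C}$-$\mathrm{E}$ injectivity; hence $I_{n} \cong \mathrm{Z}_{n}(I) \oplus \mathrm{B}_{n-1}(I)$ is a finite direct sum of injectives and is itself injective. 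Therefore any bounded $\mathrm{C}$-$\mathrm{E}$ injective representative of $X$ is in particular a bounded complex of injective modules representing $X$, forcing the desired inequality.

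For the reverse inequality $\mathrm{CE}\text{-}\mathrm{id}_{R}X \leq \mathrm{id}_{R}X$, I would dualize the truncation argument in the proof of Theorem 5.5. Since $\mathrm{CE}\text{-}\mathrm{id}_{R}X < \infty$ by hypothesis, Theorem 5.9 guarantees that $s := \mathrm{id}_{R}X$ is finite; pick a bounded $\mathrm{C}$-$\mathrm{E}$ injective $I$ with $X \simeq I$. Finiteness $\mathrm{id}_{R}X = s$ forces $\mathrm{H}_{n}(X) = 0$ for all $n < -s$. Form the truncation $I'$ with $I'_{n} = I_{n}$ for $n > -s$, $I'_{-s} = \mathrm{Z}_{-s}(I)$, and $I'_{n} = 0$ for $n < -s$; the canonical map $I \to I'$ is a quasi-isomorphism, so $X \simeq I'$.

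The main point to verify is that $I'$ remains a $\mathrm{C}$-$\mathrm{E}$ injective complex. In every degree $n > -s$ the modules $\mathrm{Z}_{n}(I'), \mathrm{B}_{n}(I'), \mathrm{H}_{n}(I')$ coincide with those of $I$ and are therefore injective by $\mathrm{C}$-$\mathrm{E}$ injectivity of $I$; at $n = -s$ one has $\mathrm{Z}_{-s}(I') = \mathrm{Z}_{-s}(I)$, $\mathrm{B}_{-s}(I') = \mathrm{B}_{-s}(I)$, and $\mathrm{H}_{-s}(I') = \mathrm{H}_{-s}(I)$, all injective for the same reason; and everything vanishes below $-s$. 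Thus $I'$ is a bounded $\mathrm{C}$-$\mathrm{E}$ injective complex of length at most $s$ representing $X$, whence $\mathrm{CE}\text{-}\mathrm{id}_{R}X \leq s = \mathrm{id}_{R}X$. Unlike the projective case (which invokes \cite[Theorem 2.4.P]{AF91} to control the projectivity of the truncated endpoint), the $\mathrm{C}$-$\mathrm{E}$ injectivity of $I$ already furnishes injectivity of $\mathrm{Z}_{-s}(I)$ for free, so no auxiliary dimension-theoretic input is needed; this is the only mildly subtle aspect of the argument.
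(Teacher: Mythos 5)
Your proof is correct and follows essentially the same route the paper intends: the inequality $\mathrm{id}_{R}X\leq \mathrm{CE}\text{-}\mathrm{id}_{R}X$ is the injective dual of Remark \ref{rem 5.4}(2), and the reverse inequality is exactly the dual of the truncation argument in (2)$\Rightarrow$(1) of Theorem \ref{thm 5.5}, which is how the paper derives Corollary \ref{cor 5.6} and, by duality, Corollary \ref{cor 5.10}. Your observation that $\mathrm{Z}_{-s}(I)$ is injective directly from the definition of a $\mathrm{C}$-$\mathrm{E}$ injective complex (so no analogue of \cite[Theorem 2.4.I]{AF91} is strictly needed) is accurate; the only nitpick is that the canonical quasi-isomorphism for your truncation is the inclusion $I'\hookrightarrow I$ rather than a map $I\to I'$.
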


\begin{thm}\label{thm 5.11}
Let $X\in\mathcal{D}_{\sqsubset}(R)$.
Assume that $X\simeq E$ for a $\mathrm{C}$-$\mathrm{E}$ Gorenstein injective complex $E$.
Then the following are equivalent:\\
\indent$(1)$  $\mathrm{CE}\text{-}\mathrm{Gid}_{R}X$ is finite.\\
\indent$(2)$  $\mathrm{Gid}_{R}X$, $\mathrm{Gid}_{R}\mathrm{Z}(X)$, $\mathrm{Gid}_{R}\mathrm{B}(X)$
and $\mathrm{Gid}_{R}\mathrm{H}(X)$ are all finite.

Moreover, if $R$ is a noetherian ring with a dualizing module $C$, then the above are equivalent to:\\
\indent$(3)$   $X\in \mathrm{CE}\text{-}\mathcal{B}_{C}(R)$.\\
\indent$(4)$  $X$, $\mathrm{Z}(X)$, $\mathrm{B}(X)$ and $\mathrm{H}(X)$ are in $\mathcal{B}_{C}(R)$.
\end{thm}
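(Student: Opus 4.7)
The plan is to replay the proof of Theorem \ref{thm 5.7} with every construction dualized: injective modules in place of projective ones, cosyzygies in place of syzygies, and the truncation taken at the left end of the complex rather than the right. All four equivalences then follow from the corresponding dual results already cited in Theorem \ref{thm 5.7}.

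For (1) $\Longleftrightarrow$ (2), assume first that $\mathrm{CE}\text{-}\mathrm{Gid}_{R}X < \infty$, so $X \simeq E$ for a bounded $\mathrm{C}$-$\mathrm{E}$ Gorenstein injective complex $E$. Then $\mathrm{Gid}_{R}X \leq \mathrm{CE}\text{-}\mathrm{Gid}_{R}X < \infty$ is immediate, and $\mathrm{H}(X) \simeq \mathrm{H}(E)$ is a bounded complex of Gorenstein injective modules, so $\mathrm{Gid}_{R}\mathrm{H}(X) < \infty$. The two-out-of-three property for Gorenstein injective dimension (the dual of \cite[Theorem 3.9(1)]{Vel06}), applied to the exact sequences $0 \to \mathrm{B}(X) \to \mathrm{Z}(X) \to \mathrm{H}(X) \to 0$ and $0 \to \mathrm{Z}(X) \to X \to \mathrm{B}(X)[1] \to 0$, then forces $\mathrm{Gid}_{R}\mathrm{Z}(X)$ and $\mathrm{Gid}_{R}\mathrm{B}(X)$ to be finite as well. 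Conversely, set $s = \mathrm{Gid}_{R}X$ and $i = \sup(X)$, and let $E$ be any $\mathrm{C}$-$\mathrm{E}$ Gorenstein injective representative of $X$. Invoking the dual of \cite[Theorem 3.1]{CFH06}, the cokernel $E_{-s}/\mathrm{B}_{-s}(E)$ is itself Gorenstein injective, so the soft-truncation
$$E_{(i,-s)} \;=\; 0 \longrightarrow \mathrm{Z}_{i}(E) \longrightarrow E_{i-1} \longrightarrow \cdots \longrightarrow E_{-s+1} \longrightarrow E_{-s}/\mathrm{B}_{-s}(E) \longrightarrow 0$$
is a bounded $\mathrm{C}$-$\mathrm{E}$ Gorenstein injective complex quasi-isomorphic to $X$, giving $\mathrm{CE}\text{-}\mathrm{Gid}_{R}X \leq s$.

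For the remaining equivalences, assume $C$ is a dualizing module. The implication (1) $\Longrightarrow$ (3) is immediate once one knows that every Gorenstein injective module lies in $\mathcal{B}_{C}^{0}(R)$, which is the dual of \cite[Proposition 3.9]{EJOR05}; a bounded $\mathrm{C}$-$\mathrm{E}$ Gorenstein injective representative $E$ of $X$ then has $E$, $\mathrm{Z}(E)$, $\mathrm{B}(E)$ and $\mathrm{H}(E)$ all consisting of modules in $\mathcal{B}_{C}^{0}(R)$, whence $X \in \mathrm{CE}\text{-}\mathcal{B}_{C}(R)$. The step (3) $\Longrightarrow$ (4) is just the dual half of Proposition \ref{prop 4.2}, and (4) $\Longleftrightarrow$ (2) is the Gorenstein-injective counterpart of \cite[Theorem 4.1]{CFH06}, applied separately to each of $X$, $\mathrm{Z}(X)$, $\mathrm{B}(X)$ and $\mathrm{H}(X)$; these together close the loop with (1).

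The only step I anticipate as a genuine obstacle is verifying that $E_{-s}/\mathrm{B}_{-s}(E)$ is Gorenstein injective when $s$ is the ordinary Gorenstein injective dimension of $X$ rather than of the individual modules $E_{n}$. This is exactly where the dual of \cite[Theorem 3.1]{CFH06} is needed, and it is the one place where the proof is not purely formal. Once that lemma is in hand, checking that the truncation $E_{(i,-s)}$ together with its cycles, boundaries and homology still consists of Gorenstein injective modules is straightforward, since in every intermediate degree the cycles, boundaries and homology of $E_{(i,-s)}$ agree with those of $E$, and the two extreme degrees are handled by the choice of truncation.
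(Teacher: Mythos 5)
Your overall strategy---dualizing the proof of Theorem \ref{thm 5.7} clause by clause---is exactly what the paper intends (Theorem \ref{thm 5.11} is stated there without proof, as the formal dual of Theorem \ref{thm 5.7}), and your treatment of $(1)\Rightarrow(2)$, $(1)\Rightarrow(3)$, $(3)\Rightarrow(4)$ and $(4)\Leftrightarrow(2)$ is correct. However, the one step you yourself flag as non-formal, namely $(2)\Rightarrow(1)$, is dualized incorrectly: you have interchanged the two soft truncations. The complex
$$0 \longrightarrow \mathrm{Z}_{i}(E) \longrightarrow E_{i-1} \longrightarrow \cdots \longrightarrow E_{-s+1} \longrightarrow E_{-s}/\mathrm{B}_{-s}(E) \longrightarrow 0$$
is not quasi-isomorphic to $X$ in general: its homology in degree $i$ is $\mathrm{Z}_{i}(E)$ rather than $\mathrm{H}_{i}(E)=\mathrm{Z}_{i}(E)/\mathrm{B}_{i}(E)$ (try $E=D^{i+1}(Q)\oplus S^{i}(M)$ with $Q$ injective), and its homology in degree $-s$ is $E_{-s}/\mathrm{B}_{-s}(E)$ rather than $\mathrm{H}_{-s}(E)=\mathrm{Z}_{-s}(E)/\mathrm{B}_{-s}(E)$. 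Compounding this, the dual of \cite[Theorem 3.1]{CFH06} asserts that the \emph{kernel} $\mathrm{Z}_{-s}(E)$ of a left-bounded Gorenstein injective resolution is Gorenstein injective when $\mathrm{Gid}_{R}X\leq s$; it says nothing about the cokernel $E_{-s}/\mathrm{B}_{-s}(E)$. You dualized the index $s\mapsto -s$ but not the kernel/cokernel, so the module you need to control is not the one your cited lemma controls.

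The repair is to mirror the truncations $A_{(s,i)}$ and $P_{(s,i)}$ of Proposition \ref{prop 4.2} and Theorem \ref{thm 5.5} correctly: take the quotient truncation at the top and the subcomplex truncation at the bottom, i.e.
$$E_{(i,-s)} = 0 \longrightarrow E_{i}/\mathrm{B}_{i}(E) \longrightarrow E_{i-1} \longrightarrow \cdots \longrightarrow E_{-s+1} \longrightarrow \mathrm{Z}_{-s}(E) \longrightarrow 0,$$
which preserves homology in degrees $\leq i=\sup(X)$ and $\geq -s$ and hence is quasi-isomorphic to $X$. Then $E_{i}/\mathrm{B}_{i}(E)$ is Gorenstein injective because it is an extension of $\mathrm{B}_{i-1}(E)$ by $\mathrm{H}_{i}(E)$ and the class of Gorenstein injective modules is closed under extensions, while $\mathrm{Z}_{-s}(E)$ is Gorenstein injective by the dual of \cite[Theorem 3.1]{CFH06} applied to the (now left-bounded) top-truncated complex; the cycles, boundaries and homology of $E_{(i,-s)}$ in the intermediate and extreme degrees are then checked to lie in $\mathcal{G}(\mathcal{I})$ exactly as you describe. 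With this correction your argument goes through and agrees with the intended proof.
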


\begin{cor}\label{cor 5.12}
Let $X\in\mathcal{D}_{\sqsubset}(R)$ be a complex of finite $\mathrm{C}$-$\mathrm{E}$ Gorenstein injective dimension. Then
$\mathrm{CE}\text{-}\mathrm{Gid}_{R}X = \mathrm{Gid}_{R}X$.
\end{cor}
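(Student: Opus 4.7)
The plan is to dualize the argument of Corollary~\ref{cor 5.8} step by step, using Theorem~\ref{thm 5.11} in the role played there by Theorem~\ref{thm 5.7}. One inequality comes for free: any bounded $\mathrm{C}$-$\mathrm{E}$ Gorenstein injective complex representing $X$ is in particular a left-bounded complex of Gorenstein injective modules, so the dual of Remark~\ref{rem 5.4}(2) yields $\mathrm{Gid}_{R}X \le \mathrm{CE}\text{-}\mathrm{Gid}_{R}X$. All the content of the corollary lies in the reverse inequality.

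Assuming $\mathrm{CE}\text{-}\mathrm{Gid}_{R}X<\infty$, the equivalence (1)$\Longleftrightarrow$(2) of Theorem~\ref{thm 5.11} gives $s := \mathrm{Gid}_{R}X<\infty$. Set $u = \mathrm{sup}(X)$ and pick a bounded $\mathrm{C}$-$\mathrm{E}$ Gorenstein injective complex $E$ with $X\simeq E$. Dualizing the truncation constructed in the proof of (2)$\Longrightarrow$(1) of Theorem~\ref{thm 5.7}, form
\[
E_{(u,-s)} \;=\; 0 \to E_{u}/\mathrm{B}_{u}(E) \to E_{u-1} \to \cdots \to E_{-s+1} \to \mathrm{Z}_{-s}(E) \to 0,
\]
a bounded complex with $X\simeq E_{(u,-s)}$. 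By the dual of \cite[Theorem~3.1]{CFH06} (which underlies the proof of Theorem~\ref{thm 5.11}), the new bottom module $\mathrm{Z}_{-s}(E)$ is Gorenstein injective; and by Proposition~\ref{prop 2.3} applied to the $\mathrm{C}$-$\mathrm{E}$ Gorenstein injective complex $E$, the new top module $E_{u}/\mathrm{B}_{u}(E)\cong \mathrm{H}_{u}(E)\oplus \mathrm{B}_{u-1}(E)$ is a direct sum of Gorenstein injective modules.

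The main obstacle is to verify that $E_{(u,-s)}$ is not merely a bounded complex of Gorenstein injective modules but in fact a $\mathrm{C}$-$\mathrm{E}$ Gorenstein injective complex. By Corollary~\ref{cor 3.7}, this reduces to checking that $\mathrm{B}(E_{(u,-s)})$ and $\mathrm{H}(E_{(u,-s)})$ consist of Gorenstein injective modules. Since $E$ is already $\mathrm{C}$-$\mathrm{E}$ Gorenstein injective, these modules agree with those of $E$ except possibly at the boundary degrees $-s$ and $u$, and the new boundary entries are handled by the short exact sequences of Proposition~\ref{prop 2.3} together with the closure of $\mathcal{G}(\mathcal{I})$ under extensions (the injective analogue of \cite[Corollary~4.5]{SWSW08}). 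Once this is confirmed, $\mathrm{CE}\text{-}\mathrm{Gid}_{R}X \le s = \mathrm{Gid}_{R}X$, and combined with the free inequality this gives the desired equality.
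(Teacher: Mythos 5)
Your argument is correct and is essentially the paper's own (implicit) proof: the paper leaves Corollary 5.12 as the formal dual of Corollary 5.8, which in turn follows from the soft-truncation argument in the proof of Theorem 5.7 (2)$\Longrightarrow$(1) combined with the trivial inequality $\mathrm{Gid}_{R}X\leq \mathrm{CE}\text{-}\mathrm{Gid}_{R}X$, exactly as you set it up (and you are in fact more careful than the paper in checking that the truncated complex is still C-E Gorenstein injective). One small caveat: Proposition 2.3 does not give the splitting $E_{u}/\mathrm{B}_{u}(E)\cong \mathrm{H}_{u}(E)\oplus\mathrm{B}_{u-1}(E)$, since Gorenstein injective modules need not form a self-orthogonal class, but the short exact sequence $0\to \mathrm{H}_{u}(E)\to E_{u}/\mathrm{B}_{u}(E)\to \mathrm{B}_{u-1}(E)\to 0$ together with the extension-closure of $\mathcal{G}(\mathcal{I})$, which you also invoke, already yields the needed conclusion.
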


\begin{ack*}
The research was supported by National Natural Science Foundation of China (No.
11401476, 11261050) and  NWNU-LKQN-13-1.
The authors would like to thank Dr. Bo Lu, Dr. Gang Yang and Dr. Chunxia Zhang for their suggestions on the
early version of the manuscript.
\end{ack*}

\bigskip

\end{document}